\numberwithin{equation}{section}
\theoremstyle{definition}
\newtheorem{Thm}[equation]{Theorem}
\newtheorem{Cor}[equation]{Corollary}
\newtheorem{Lem}[equation]{Lemma}
\newtheorem{Exa}[equation]{Example}
\newtheorem{Rmk}[equation]{Remark}
\def\imod#1{\allowbreak\mkern5mu{\operator@font mod}\,\,#1}
\begin{document}

\title[Divisibility]{Divisibility Properties for Weakly Holomorphic Modular Forms with Sign Vectors}
\author[Yichao Zhang]{Yichao Zhang}
\address{Department of Mathematics, the University of Arizona, Tucson, AZ 85721-0089}
\email{yichaozhang@math.arizona.edu, zhangyichao2002@gmail.com}
\date{}
\subjclass[2010]{Primary: 11F41, 11F27; Secondary: 11F25}
\keywords{reduced modular form, weakly holomorphic, sign vector, Hecke operator, divisibility.}

\begin{abstract}
In this paper, we prove some divisibility results for the Fourier coefficients of reduced modular forms of sign vectors. More precisely, we generalize a divisibility result of Siegel on constant terms when the weight is non-positive, which is related to the weight of Borcherds lifts when the weight is zero. By considering Hecke operators for the spaces of weakly holomorphic modular forms with sign vectors, and obtain divisibility results in an ``orthogonal" direction on reduced modular forms. 
\end{abstract}

\maketitle

\section*{Introduction}

\noindent
Weakly holomorphic modular forms for Weil representations has become an active field of research since Borcherds \cite{borcherds1998automorphic} discovered the theory of automorphic products using regularized theta lifting. Roughly speaking such a lift sends weakly holomorphic modular forms for Weil representations to automorphic forms on orthogonal groups. In order to concretely view such a lift, Bruinier and Bundschuh \cite{bruinier2003borcherds} constructed an isomorphism between certain spaces of (scalar-valued) weakly holomorphic modular forms and certain spaces of weakly holomorphic modular forms for Weil representations when the level $N=p$ is an odd prime. Such an isomorphism was recently generalized by the author to more general level $N$ (\cite{zhang2014isomorphism, zhang2013zagier}) and notions such as sign vectors and reduced modular forms were introduced. Such an isomorphism proves to be useful and has important applications. For example, on the set of reduced modular forms, Zagier duality \cite{zhang2013zagier} was obtained, a duality between Fourier coefficients of integral weight $k$ modular forms and that of weight $2-k$ modular forms. See also the application on automorphic correction of hyperbolic Kac-Moody algebras (\cite{kim2012rank,kim2013weakly}).

Because of the isomorphism to modular forms for Weil representations of $\text{SL}_2(\mathbb Z)$, such modular forms of sign vectors are essentially of level one and consequently properties of classical modular forms of level one should also hold on these modular forms. For example, the holomorphicity at $\infty$ of a weakly holomorphic modular form with sign vector determines its holomorphicity at other cusps. In this paper, we consider some divisibility properties of Fourier coefficients of reduced modular forms with sign vectors. Following an argument of Duke and Jenkins \cite{duke2008zeros}, we first extend a result of Siegel \cite{siegel1969berechnung} on the constant terms of reduced modular forms of level one and of weight $k\leq 0$ (Theorem 4.2). We note that Siegel's divisibility result actually holds for all reduced modular forms $f_m$, not just for $f_{-\ell-1}$, where $\ell$ is the dimension of the cuspform space of weight $2-k$. 

Such divisibility becomes very interesting when $k=0$, for which we have the Borcherds's lift and the constant term of $f_m$ represents the weight of the resulting Borcherds's lift $\Psi_{f_m}$ (a Hilbert modular form if $N$ is a fundamental discriminant). When $N=5$ and $\epsilon=+1$, the reduced modular forms, computed by Bruinier and Bundschuh \cite{bruinier2003borcherds}, begin with
\begin{align*}f_{-1}&=q^{-1} + 5 + 11q - 54q^4 + 55q^5 + 44q^6 - 395q^9 + 340q^{10}+O(q^{11}),\\
f_{-4}&=q^{-4} + 15 - 216q + 4959q^4 + 22040q^5 - 90984q^6 + 409944q^9 + 1388520q^{10}+O(q^{11}),\\
f_{-5}&=\frac{1}{2}q^{-5} + 15 + 275q + 27550q^4 + 43893q^5 + 255300q^6 + 4173825q^9 + 4807100q^{10}+O(q^{11}).
\end{align*}
We may easily see that the constant term of $f_m$ is divisible by $5$ for all $m$, and consequently the weights of the Borcherds lift $\Psi_{f_m}$ are all divisible by $5$. We note that this can also be seen from the Zagier duality since the obstruction space is trivial. However, it seems that there is no obvious way of seeing this from the construction of $f_m$. One may hope for more such divisibility results, but it turns out that above particular example is the only possible divisibility that can be thus obtained on the weights of Borcherds lifts. In other words, above method does not apply to more cases  when $k=0$. This is related to generalized Bernoulli numbers, whose denominators are trivial in other cases (see Remark 4.5 for details).

We then proceed in a direction ``orthogonal" to Siegel's result and its generalization. The corresponding divisibility properties happen inside individual reduced modular forms of weight $k\leq 0$. For level one weakly holomorphic modular forms of weight $k$ with $2-k=4,6,8,10,14$, such results were obtained by Duke and Jenkins \cite{duke2008zeros}, using the fact that the weight $2-k$ cuspform space, the obstruction space, is trivial. As Duke and Jenkins did, assuming certain integrality on the Fourier coefficients of reduced modular forms, we consider the Hecke operators on such spaces of modular forms and then derive a few divisibility results (Theorem 4.7, 4.9). The assumption on trivial obstruction spaces is also needed, but the situation is subtler because of the existence of more than one sign vectors. Several examples will be presented in order to make it clear. Note that Bruinier and Stein \cite{bruinier2010weil} constructed Hecke operators for modular forms for Weil representations, and in our particular case, their Hecke operators belong to a subalgebra of the level $N$ Hecke operators via the isomorphism. We finally see another way of obtaining such divisibility results by applying the differential operator $D^{1-k}$ with $D=q\frac{d}{dq}$ (Theorem 4.13), where divisibility by $p\mid N$ is included. These two methods have vanishing assumptions on different $\epsilon$-subspaces.

Here is the layout of this paper. We recall necessary notions in the first section, and give an easier proof of Zagier duality in the second section. In Section 3, we briefly consider Hecke operators for subspaces with sign vectors. In the last section, we prove divisibility results, in two directions, for Fourier coefficients of reduced modular forms.

\subsection*{Acknowledgments} 

\section{Preliminaries on Modular Forms}\label{scalar}
\noindent
We set up the notations and recall some results in this section. See \cite{miyake2006modular} for the general theory on modular forms and \cite{zhang2014isomorphism,zhang2013zagier} for results on modular forms of sign vectors.

We shall fix a primitive quadratic Dirichlet character $\chi$ and denote its conductor by $N$. Assume that $\chi$ decomposes to $\chi=\prod_{p\mid N}\chi_p$. Even though most results hold for the degenerate case when $\chi$ is trivial and $N=1$, we shall assume that $N>1$. 
Denote by $N_p$ the largest $p$-power in the factorization of $N$.
We shall write $p^\nu||m$ if $p^\nu\mid m$ and $p^{\nu+1}\nmid m$, so $N_p||N$. Note that $N_p=p$ if $2<p\mid N$ and $N_2=1,4$ or $8$.

Let $k\in\mathbb Z$. We denote $A(N,k,\chi)$ the space of weakly holomorphic modular functions of level $N$, weight $k$ and character $\chi$; namely, the space of functions $f$ that are holomorphic on the upper half plane, meromorphic at cusps, and
\[(f|_kM)(\tau)=\chi(d)f(\tau),\quad\text{for all } M=\begin{pmatrix}a&b\\c&d\end{pmatrix}\in\Gamma_0(N).\]
Here $f|_kM(\tau)=\text{det}(M)^\frac{k}{2}(c\tau+d)^{-k}f(\tau)$ for $M\in\text{GL}_2^+(\mathbb R)$ is the slash-$k$ operator. Let $M(N,k,\chi)$ and $S(N,k,\chi)$ be the subspace of holomorphic forms and that of cuspforms respectively.

For each sign vector $\epsilon=(\epsilon_p)_{p\mid N}$, that is $\epsilon_p\in\{\pm 1\}$, we impose the $\epsilon$-condition and obtain the subspace $A^\epsilon(N,k,\chi)$ for each  of $A(N,k,\chi_D)$ as follows:
\[A^\epsilon(N,k,\chi_D)=\left\{f=\sum_n a(n)q^n\in A(N,k,\chi_D)\left|
 \begin{split}a(n)=0 \text{ if } \chi_p(n)=-\epsilon_p \text{ for some }p\mid N\end{split}\right.\right\}.\]
Denote $M^\delta(N,k,\chi)=M(N,k,\chi)\cap A^\delta(N,k,\chi)$ and similarly we have $S^\delta(N,k,\chi)$.
Recall that the dual sign vector $\epsilon^*=(\epsilon_p^*)_{p\mid N}$ is defined by $\epsilon^*_p=\chi_p(-1)\epsilon_p$.

If $8\nmid N$, the pair $(\chi,\epsilon)$ determines a discriminant form $D$, that is, a finite abelian group with a $\mathbb Q/\mathbb Z$-valued nondegenerate quadratic form. Explicitly, we fix a \emph{Jordan decomposition} $D=\bigoplus_{p\mid N}D_p$ and the \emph{Jordan component} $D_p$ is determined as follows: if $p$ is an odd prime divisor of $N$, then $D_p=\mathbb Z/p\mathbb Z$ with the quadratic form on $D_p$ given by $Q(x)=\frac{ax^2}{p}$ with $\chi_p(aN/p)=\epsilon_p$; if $p=2$ and $4|| N$, then 
\[D_2=\mathbb Z/2\mathbb Z\times \mathbb Z/2\mathbb Z,\quad\text{with } Q((1,0))=Q((0,1))=\frac{\epsilon_2\chi_2(N/4)}{4}.\]
The case when $8|| N$ is more complicated. Actually, if $8|| N$ and $p=2$, there will be two discriminant forms for $D_2$. Namely,
\[D_2=\mathbb Z/2\mathbb Z\times \mathbb Z/4\mathbb Z,\quad\text{with } Q((1,0))=\frac{t_1}{4}, Q((0,1))=\frac{t_2}{8},\]
with $t_1\in \{\pm 1\}$, $t_2\in\{\pm 1,\pm 3\}$ such that
\[\chi_2(-1)=e(-(t_1+t_2)/8),\quad \chi_2(t_2N/8)=\epsilon_2.\]
One can see easily that these two possible $D_2$ are actually isomorphic, justifying the fact that Jordan components and indecomposable components are not unique in general. So if $8\mid N$, we will fix $D$ to be either of the two possible discriminant forms above.
Conversely, each of such discriminant forms determines uniquely a pair $(\chi,\epsilon)$ (see \cite{scheithauer2009weil} or \cite{zhang2013zagier}).

Every discriminant form $D$ can be realized as $M'/M$, where $M$ is an even lattice, $M'$ is the dual lattice of $M$ and the quadratic form of $D$ is that of $M$ modulo $\mathbb Z$. With such $M$, the \emph{signature} of $D$, denoted by $r$, is the signature of $M$ modulo $8$. Throughout this paper, we assume that $r$ is even. This assumption implies that the Weil representation constructed from $D$ for $\text{Mp}_2(\mathbb Z)$ factors through $\text{SL}_2(\mathbb Z)$.  The dual discriminant form $D^*$ is define to be the same abelian group with the quadratic form $-Q(\cdot)$. 

Let $\rho_D$ be the Weil representation of $SL_2(\mathbb Z)$ on $\mathbb C[D]$; that is, if $\{e_\gamma:\gamma\in D\}$ is the standard basis for the group algebra $\mathbb C[D]$, then the action
\begin{align*}
\rho_D(T)e_\gamma &= e(q(\gamma))e_\gamma,\\
\rho_D(S)e_\gamma &=\frac{i^{-\frac{r}{2}}}{\sqrt{N}}\sum_{\delta\in D}e(-(\gamma,\delta))e_\delta,
\end{align*}
defines the unitary representation $\rho_D$ of $\text{SL}_2(\mathbb Z)$ on $\mathbb C[D]$. Here and after, $e(z)=e^{2\pi i z}$ for $z\in\mathbb C$.

Let $\mathcal A(k,\rho_D)$ be the space of modular forms of weight $k$ and type $\rho_D$. That is, $F=\sum_\gamma F_\gamma e_\gamma\in\mathcal A(k,\rho_D)$ if $F|_kM:=\sum_\gamma (F_\gamma|_kM)e_\gamma=\rho_D(M)F$ for any $M\in\text{SL}_2(\mathbb Z)$, $F_\gamma$ is holomorphic on the upper half plane and $F_\gamma=\sum_{n\in q(\gamma)+\mathbb Z}a(\gamma,n)q^n$ with at most finitely many negative power terms. Let $\mathcal M(k,\rho_D)$ and $\mathcal S(k,\rho_D)$ denote the space of holomorphic forms and the space of cusp forms respectively.
We shall also need $\mathcal A^\text{inv}(k,\rho_D)$, the subspace of modular forms that are invariant under $\text{Aut}(D)$. Analogously we have $\mathcal M^{\text{inv}}(k,\rho_D)$ and $\mathcal S^{\text{inv}}(k,\rho_D)$.

For convenience, we quote the isomorphism theorem in \cite{zhang2013zagier} as follows:

\begin{Thm}[{\cite[Theorem 3.3]{zhang2013zagier}}] Assume that $D$ and $(\chi,\epsilon)$ correspond to each other as described above.
There exists an isomorphism between $A^\epsilon(N,k,\chi)$ and $\mathcal A^\text{inv}(k,\rho_D)$, which sends $f=\sum_na(n)q^n\in A^\epsilon(N,k,\chi)$ to $F=\sum_\gamma F_\gamma e_\gamma$ with
\[F_\gamma(\tau)=s(NQ(\gamma))\sum_{n\equiv NQ(\gamma)\imod N\mathbb Z}a(n)e\left(n\tau/N\right)=\sum_{n\equiv NQ(\gamma)\imod N\mathbb Z}s(n)a(n)e\left(n\tau/N\right).\]
\end{Thm}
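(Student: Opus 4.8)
The plan is to verify directly that the stated assignment $f\mapsto F$ is a well-defined linear map into $\mathcal A^{\text{inv}}(k,\rho_D)$ and then to exhibit an inverse. The starting observation is that each component is a finite Fourier transform of translates of $f$: by orthogonality of the additive characters modulo $N$, for $c=NQ(\gamma)$ one has
\[F_\gamma(\tau)=\frac{s(c)}{N}\sum_{j\imod N}e\!\left(-\frac{cj}{N}\right)f\!\left(\frac{\tau+j}{N}\right).\]
This makes the transformation behaviour of $F$ accessible through the level-$N$ transformation law of $f$ applied to the translates $f((\tau+j)/N)$, and it already shows that $F_\gamma$ depends only on $c\bmod N$, hence only on $Q(\gamma)\bmod\mathbb Z$. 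Since every $\sigma\in\text{Aut}(D)$ preserves $Q$, this forces $F_{\sigma\gamma}=F_\gamma$ and places $F$ in the invariant subspace; one also checks that the exponents occurring in $F_\gamma$ lie in $Q(\gamma)+\mathbb Z$, because $n\equiv NQ(\gamma)\imod{N\mathbb Z}$ gives $n/N\equiv Q(\gamma)\imod{\mathbb Z}$.

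First I would check the two generators of $\text{SL}_2(\mathbb Z)$. For $T\colon\tau\mapsto\tau+1$ the factor $e(n\tau/N)$ in $F_\gamma$ picks up $e(n/N)=e(Q(\gamma))$, so $F_\gamma|_kT=e(Q(\gamma))F_\gamma$, which is exactly the diagonal action $\rho_D(T)$; this is routine. The substantive step is $S\colon\tau\mapsto-1/\tau$. Here the translate becomes $f\!\left(\frac{j\tau-1}{N\tau}\right)$, and the matrix $\left(\begin{smallmatrix}j&-1\\N&0\end{smallmatrix}\right)$ of determinant $N$ can be reduced, using the $\Gamma_0(N)$-modularity of $f$ together with the Fricke involution $W_N=\left(\begin{smallmatrix}0&-1\\N&0\end{smallmatrix}\right)$, to a combination of the translates $f((\tau+j')/N)$ weighted by the character $\chi$. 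Collecting the resulting $j$-sum produces a Gauss sum of the quadratic character $\chi=\prod_{p\mid N}\chi_p$ against the additive characters, and the goal is to match it with $\rho_D(S)$, that is, to prove
\[F_\delta|_kS=\frac{i^{-\frac r2}}{\sqrt N}\sum_{\gamma\in D}e(-(\gamma,\delta))\,F_\gamma.\]

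The main obstacle is exactly this $S$-identity: one must show that the character sum arising from the Fricke involution coincides term by term with the Weil-representation entries $\frac{i^{-r/2}}{\sqrt N}e(-(\gamma,\delta))$. I would handle this by factoring everything over the primes $p\mid N$: the pairing $(\gamma,\delta)$ and the form $Q$ split along the Jordan decomposition $D=\bigoplus_pD_p$, so the global Gauss sum factors into local ones. For odd $p$ the relevant factor is the classical quadratic Gauss sum $\sum_{x\imod p}\chi_p(x)e(x/p)$, and the normalizing function $s$ is chosen precisely to absorb its argument; the delicate cases are $p=2$ with $4||N$ and especially $8||N$, where both admissible forms for $D_2$ and the half-integral exponents must be treated. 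To recover the \emph{global} constant $i^{-r/2}/\sqrt N$ I would invoke the Milgram formula $\sum_{\gamma\in D}e(Q(\gamma))=\sqrt{|D|}\,e(r/8)$ together with the multiplicativity of the Gauss sum $\tau(\chi)=\prod_p\tau(\chi_p)$, which ties the product of the local signs to the signature $r$ modulo $8$. This is where the standing hypothesis that $r$ is even, so that $\rho_D$ factors through $\text{SL}_2(\mathbb Z)$, is used.

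Finally I would establish bijectivity. Injectivity is immediate from the finite Fourier transform formula, since $s(c)\neq0$ and the coefficients $a(n)$ with $n\equiv NQ(\gamma)\imod{N\mathbb Z}$ are read off directly from $F_\gamma$. Moreover, the residues $n\bmod N$ that actually occur as $NQ(\gamma)$ for some $\gamma\in D$ are exactly those permitted by the $\epsilon$-condition: for $D_p=\mathbb Z/p\mathbb Z$ with $Q(x)=ax^2/p$ one computes $\chi_p(NQ(x))=\chi_p(aN/p)=\epsilon_p$ when $x\neq0$ and $p\mid NQ(x)$ when $x=0$, so the admissible $n$ are precisely those with $\chi_p(n)\in\{0,\epsilon_p\}$ for every $p\mid N$. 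For surjectivity, given an invariant $F\in\mathcal A^{\text{inv}}(k,\rho_D)$, I would set $a(n)=s(n)^{-1}$ times the coefficient of $e(n\tau/N)$ in $F_\gamma$ for any $\gamma$ with $NQ(\gamma)\equiv n$; invariance makes this independent of the choice of $\gamma$, and reversing the transformation computations above shows that $f=\sum_na(n)q^n$ lies in $A^\epsilon(N,k,\chi)$ and maps back to $F$. Holomorphicity on the upper half plane and meromorphy at the cusps transfer in both directions because the correspondence is a finite linear recombination of translates of a single weakly holomorphic form.
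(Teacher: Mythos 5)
This paper never proves Theorem 1.1: it is quoted verbatim from \cite[Theorem 3.3]{zhang2013zagier}, so your attempt can only be measured against the strategy of that reference and of Bruinier--Bundschuh \cite{bruinier2003borcherds}, which it generalizes. Your skeleton is the standard one and matches that strategy in outline: the finite Fourier inversion formula for $F_\gamma$, the routine $T$-check, the reduction of the $S$-identity to local Gauss sums along the Jordan decomposition together with Milgram's formula, and the identification of the residues $NQ(\gamma)\bmod N$ with the classes allowed by the $\epsilon$-condition are all correct ingredients, and your $T$-computation and injectivity argument are fine as written.

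The genuine gap is in the $S$-step, and it is not a detail. You propose to rewrite $f\left(\frac{j\tau-1}{N\tau}\right)$ ``using the $\Gamma_0(N)$-modularity of $f$ together with the Fricke involution $W_N$,'' but membership in $A(N,k,\chi)$ prescribes nothing about $f|_kW_N$: the Fricke involution preserves the space $A(N,k,\chi)$, yet $f|_kW_N$ is in general not a multiple of $f$, so your $j$-sum does not close up into a combination of the components $F_\gamma$, which are built from $f$ alone; moreover, for composite $N$ the terms with $1<\gcd(j,N)<N$ involve the partial Atkin--Lehner involutions $\eta_{N_p}$, not only $W_N$. The missing idea is precisely that the $\epsilon$-condition is equivalent to $f$ being a simultaneous Atkin--Lehner eigenform with eigenvalues prescribed by $\epsilon$ and $\chi$ --- the lemma this paper itself invokes in Section 3 (\cite[Lemma 2.5]{zhang2013zagier}, \cite[Corollary 4.12]{zhang2014isomorphism}); only through that equivalence do the expansions of $f$ at the remaining cusps, hence the translates with $\gcd(j,N)>1$, become expressible in terms of the coefficients $a(n)$. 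As written, your forward (modularity) argument never uses the hypothesis $f\in A^\epsilon(N,k,\chi)$ --- the $\epsilon$-condition appears only in your bijectivity discussion --- and the statement is false without it: for $N=p$ prime and a nonzero $f\in A^{-\epsilon}(p,k,\chi)$ having some $a(n)\neq 0$ with $p\mid n$, your formula yields $F=F_0e_0$ (all components with $\gamma\neq 0$ vanish, since they collect coefficients $a(n)$ with $\chi(n)=\epsilon$), and comparing the $e_\delta$-components, $\delta\neq 0$, in $F|_kS=\rho_D(S)F$ forces $F_0=0$, a contradiction. So any argument whose modularity verification would go through for all of $A(N,k,\chi)$ cannot be correct; the Atkin--Lehner eigenvalue property must carry the weight in the $S$-computation, and that is the step your proposal is missing.
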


Here for each $m\imod N$, $s(m)=2^{\omega((m,N))}$ and $\omega(m)$ the number of distinct prime divisors of $m$. Throughout this paper, by \emph{the isomorphism}, we shall always mean the one in Theorem 1.1.

\section{Zagier Duality}

In \cite{zhang2013zagier}, we proved the Zagier duality for reduced modular forms and obtained the complete grids. In order to avoid some computational difficulty, we assumed that $8\nmid N$. In this section, for completeness and to remove the assumption that $8\nmid N$, we prove the Zagier duality in a different way. Roughly speaking, we pass to the spaces $\mathcal A^{\text{inv}}(k,\rho_D)$ and utilize a pairing therein.

We first recall that a \emph{reduced modular forms of order $m$} in $A^\epsilon(N,k,\chi)$ for each $m\in\mathbb Z$, denoted by $f_m$ or $f_m^\epsilon$ if it exists, is the modular form of the form $f_m=\sum_na(n)q^n=\frac{1}{s(m)}q^m+O(q^{m+1})$ such that for each $n>m$ with $a(n)\neq 0$, there does not exist $g\in A^\epsilon(N,k,\chi)$ such that $g=q^n+O(q^{n+1})$. Such notion is a generalization of the modular forms in a \emph{Miller basis} for level one holomorphic modular form spaces.

\begin{Lem}
For any integers $k_1,k_2$, we have following pairing
\[\mathcal A(k_1,\rho_D)\times \mathcal A(k_2,\rho_{D^*})\rightarrow A(1,k_1+k_2,1),\] 
given by
\[\langle F,G\rangle=\sum_{\gamma\in D}F_\gamma G_\gamma,\quad F=\sum_{\gamma\in D}F_\gamma e_\gamma,G=\sum_{\gamma\in D}G_\gamma e_\gamma.\] In particular, $\langle F|_{k_1}M,G|_{k_2}M\rangle=\langle F,G\rangle$ for all $M\in\text{SL}_2(\mathbb Z)$.
\end{Lem}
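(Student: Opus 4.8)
The plan is to verify directly that the product pairing lands in the right space and is $\mathrm{SL}_2(\mathbb Z)$-invariant, and the key computation is that the Weil representation $\rho_D$ is unitary with $\rho_{D^*} = \overline{\rho_D} = \rho_D^{-1,\mathrm{t}}$ in the standard basis. First I would fix $M \in \mathrm{SL}_2(\mathbb Z)$ and write out the transformation of the sum $\langle F, G\rangle = \sum_{\gamma} F_\gamma G_\gamma$. Applying the slash-$k_1$ and slash-$k_2$ operators componentwise, the automorphy factors $\det(M)^{(k_1+k_2)/2}(c\tau+d)^{-(k_1+k_2)}$ combine to exactly the slash-$(k_1+k_2)$ factor, so it suffices to show the vector part transforms trivially, i.e.\ $\sum_\gamma (\rho_D(M)F)_\gamma (\rho_{D^*}(M)G)_\gamma = \sum_\gamma F_\gamma G_\gamma$.

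The heart of the matter is therefore the linear-algebra identity $\rho_D(M)^{\mathrm{t}} \rho_{D^*}(M) = \mathrm{Id}$ for every $M$. Since $\mathrm{SL}_2(\mathbb Z)$ is generated by $S$ and $T$, I would check this on the generators using the explicit formulas. On $T$ the representation $\rho_D$ acts diagonally by $e(Q(\gamma))$ and $\rho_{D^*}$ by $e(-Q(\gamma))$ (the dual form is $-Q$), so the diagonal entries multiply to $1$ and the identity is immediate. On $S$, the matrix of $\rho_D(S)$ has entries $\tfrac{i^{-r/2}}{\sqrt N} e(-(\gamma,\delta))$ while $\rho_{D^*}(S)$ has the conjugate prefactor $i^{+r/2}$ (the signature of $D^*$ is $-r$) and entries $\tfrac{i^{r/2}}{\sqrt N} e((\gamma,\delta))$; the product $\sum_\delta \rho_D(S)_{\gamma\delta}\,\rho_{D^*}(S)_{\gamma'\delta}$ collapses via the orthogonality relation $\sum_{\delta\in D} e((\gamma'-\gamma,\delta)) = |D|\,[\gamma=\gamma'] = N\,[\gamma=\gamma']$ for a nondegenerate bilinear form, and the prefactors $\tfrac{i^{-r/2}}{\sqrt N}\cdot\tfrac{i^{r/2}}{\sqrt N} = \tfrac 1N$ cancel the $N$, giving the identity on $S$ as well. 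Because both $S$ and $T$ satisfy the relation and it is multiplicative in $M$, it holds on all of $\mathrm{SL}_2(\mathbb Z)$.

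Granting this identity, the invariance $\langle F|_{k_1}M, G|_{k_2}M\rangle = \langle F, G\rangle$ follows by the computation above, and this is in fact the stronger ``In particular'' clause; the membership $\langle F,G\rangle \in A(1,k_1+k_2,1)$ is then a formal consequence. I would note that each $F_\gamma G_\gamma$ is holomorphic on the upper half plane and has a Fourier expansion with only finitely many negative exponents (since each factor does), so the sum is holomorphic and meromorphic at $\infty$; invariance under all of $\mathrm{SL}_2(\mathbb Z)$ with trivial character and the combined automorphy factor places it in $A(1,k_1+k_2,1)$. The only genuine obstacle is bookkeeping the signature-dependent root-of-unity factors: one must be careful that the signature of $D^*$ is $-r$ so that the $i^{\pm r/2}$ prefactors are genuine complex conjugates and cancel exactly, rather than introducing a spurious $(-1)^{r/2}$; here the standing assumption that $r$ is even guarantees $i^{-r/2}$ is a well-defined fourth root of unity and the factorization through $\mathrm{SL}_2(\mathbb Z)$ is legitimate.
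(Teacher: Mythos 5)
Your proof is correct and follows essentially the same route as the paper's: reduce to the generators $S$ and $T$, use the duality of the quadratic forms ($Q^*=-Q$) for $T$, and use nondegeneracy of the bilinear form via the orthogonality relation $\sum_{\delta\in D}e((\gamma'-\gamma,\delta))=|D|\,[\gamma=\gamma']$ for $S$. The paper omits exactly these details, which you have supplied correctly, including the multiplicativity of the identity $\rho_D(M)^{\mathrm t}\rho_{D^*}(M)=\mathrm{Id}$ and the role of the even-signature assumption.
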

\begin{proof}
The proof is elementary. Note that we only have to prove the transformation formula for the generators $S$ and $T$. For $T$, this is clear by noting that the two discriminant forms are dual to each other. For $S$, we shall also need the fact that the bilinear form for the discriminant form is nondegenerate. We omit the details.
\end{proof}

The duality concerns the weights $k$ and $2-k$. Therefore, if $k\neq 1$, without loss of generality, we may assume that $k\leq 0$.

\begin{Thm}[{\cite[Theorem 5.7 ]{zhang2013zagier}}] Let $k\leq 0$ be an integer, $\epsilon=(\epsilon_p)$ be any sign vector and let $\epsilon^*$ be the dual sign vector. Assume $m,d\in\mathbb Z$. Assume that both of the reduced modular forms \[f_{m}=\sum_{n\in\mathbb Z}a_{m}(n)q^n\in A^\epsilon(N,k,\chi) \quad \text{\ and\ }\quad  g_{d}=\sum_{n\in\mathbb Z}b_{d}(n)q^n\in A^{\epsilon^*}(N,2-k,\chi)\] exist (hence $m<0$). Then we  have $a_{m}(-d)=-b_{d}(-m)$.
\end{Thm}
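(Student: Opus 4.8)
The plan is to transport everything to vector-valued forms and exploit the weight-$2$ pairing of Lemma 2.1. First I would apply the isomorphism of Theorem 1.1 to send $f_m$ to $F=\sum_\gamma F_\gamma e_\gamma\in\mathcal A^{\text{inv}}(k,\rho_D)$. To handle $g_d$ I first note that the pair $(\chi,\epsilon^*)$ corresponds to the dual discriminant form $D^*$ (same group, quadratic form $-Q$): for odd $p\mid N$ one has $\chi_p(-1)\epsilon_p=\epsilon_p^*$, which is exactly the sign attached to $-Q$, and the case $p=2$ is analogous. Hence the isomorphism sends $g_d$ to $G=\sum_\gamma G_\gamma e_\gamma\in\mathcal A^{\text{inv}}(2-k,\rho_{D^*})$. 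Lemma 2.1 then gives $\langle F,G\rangle\in A(1,2,1)$, a weakly holomorphic modular form of weight $2$ on the full modular group. The single analytic input I would use is that such a form has vanishing constant term: $\langle F,G\rangle\,d\tau$ descends to a meromorphic differential on $X(1)\cong\mathbb P^1$ whose only pole is at the cusp, so the residue theorem forces the coefficient of $q^0$ to vanish.

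Next I would compute that constant term from the expansions in Theorem 1.1. Writing $F_\gamma=\sum_{n\equiv NQ(\gamma)}s(n)a_m(n)e(n\tau/N)$ and, since $D^*$ carries $-Q$, $G_\gamma=\sum_{n'\equiv -NQ(\gamma)}s(n')b_d(n')e(n'\tau/N)$, the coefficient of $q^0$ in $\langle F,G\rangle=\sum_\gamma F_\gamma G_\gamma$ collects the terms with $n'=-n$, giving
\[0=\sum_{n}R(n)\,s(n)^2\,a_m(n)\,b_d(-n),\qquad R(n)=\#\{\gamma\in D:NQ(\gamma)\equiv n \imod N\},\]
where I have used $s(-n)=s(n)$. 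The key bookkeeping step is the local identity $R(n)s(n)=2^{\omega(N)}$ whenever $n$ satisfies the $\epsilon$-condition, and $R(n)s(n)=0$ otherwise: factoring $R(n)$ through the Jordan decomposition $D=\bigoplus_{p\mid N}D_p$ by the Chinese Remainder Theorem and combining with $s(n)=2^{\omega((n,N))}$, the local factor of $R(n)s(n)$ at each $p\mid N$ equals $2$ when $\chi_p(n)\neq-\epsilon_p$ and $0$ otherwise. Since $a_m(n)\neq0$ already forces the $\epsilon$-condition, this reduces the displayed identity to $\sum_n s(n)\,a_m(n)\,b_d(-n)=0$.

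Finally I would isolate the two surviving terms. Because $f_m$ is reduced of order $m$ we have $a_m(n)=0$ for $n<m$ and $a_m(m)=1/s(m)$, and similarly $b_d(-n)=0$ for $-n<d$ with $b_d(d)=1/s(d)$; hence only indices $m\le n\le -d$ contribute. The $n=m$ term is $s(m)a_m(m)b_d(-m)=b_d(-m)$ and the $n=-d$ term is $s(-d)a_m(-d)b_d(d)=a_m(-d)$ (again using $s(-d)=s(d)$, and noting that $-d$ satisfies the $\epsilon$-condition since $d$ satisfies the $\epsilon^*$-condition). For a middle index $m<n<-d$ I would invoke the defining property of reduced forms: $a_m(n)\neq0$ with $n>m$ means no form of order $n$ exists in $A^\epsilon(N,k,\chi)$, while $b_d(-n)\neq0$ with $-n>d$ means no form of order $-n$ exists in $A^{\epsilon^*}(N,2-k,\chi)$; the complementary gap structure of reduced forms in dual weights rules out both happening at once, so every middle term vanishes. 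The identity then collapses to $b_d(-m)+a_m(-d)=0$, which is the claim.

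The step I expect to be the main obstacle is the vanishing of the middle terms, namely establishing the complementarity that no index $n$ can simultaneously fail to be an order in weight $k$ (for $\epsilon$) and have $-n$ fail to be an order in weight $2-k$ (for $\epsilon^*$); this is itself a duality/dimension statement and must be pinned down carefully. A secondary technical point is the local multiplicity identity $R(n)s(n)=2^{\omega(N)}$ in the ramified case $8\mid N$, where the two possible forms of $D_2$ require separate checking.
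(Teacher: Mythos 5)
Your proposal is correct and follows essentially the same route as the paper's own proof: transport $f_m$ and $g_d$ to vector-valued forms via the isomorphism, apply the pairing of Lemma 2.1 to get $\langle F,G\rangle\in A(1,2,1)$, use the residue theorem on $X(1)$ to force the constant term to vanish, and then isolate the boundary terms $a_m(-d)$ and $b_d(-m)$; your explicit bookkeeping $R(n)s(n)=2^{\omega(N)}$ on the $\epsilon$-supported indices is exactly the computation the paper compresses into the phrase ``by the isomorphism.'' The two points you flag as the remaining obstacles --- the vanishing of the middle terms $m<n<-d$, and (implicitly) the degenerate cases $m\geq -d$ --- are precisely what the paper resolves by citing Lemmas 5.6 and 5.5 of the earlier paper \cite{zhang2013zagier}, so your attempt matches the paper's proof step for step, with those citations standing in for the complementarity statement you correctly identified as needing separate justification.
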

\begin{proof}
Let $F\in \mathcal A(k,\rho_D)$ and $G\in\mathcal A(k,\rho_{D^*})$ be the corresponding vector-valued modular forms for $f_m$ and $g_d$ under the isomorphism.

By Lemma 2.1, we see that $\langle F,G\rangle\in A(1,2,1)$. In particular, $\langle F,G\rangle d\tau$ is a meromorphic $1$-form on the compact Riemann surface $X(1)$. It follows that the sum of residues of $\langle F,G\rangle d\tau$ vanishes. Since $F$ and $G$ are holomorphic on $\mathbb H$, the residue at $\infty$ vanishes. It is clear that the residue at $\infty$ of $\langle F,G\rangle d\tau$ is given by the constant term of $\langle F,G\rangle$, which is equal to
\[\frac{1}{2\pi i}\sum_{\gamma\in D}\sum_{n\equiv NQ(\gamma)\imod N}s(n)a_m(n)b_d(-n)=\frac{1}{2\pi i}\sum_{n\in\mathbb Z}s(n)a_m(n)b_d(-n),\]
by the isomorphism.
We then have
\[0=\sum_{n\in\mathbb Z}s(n)a_m(n)b_d(-n)=\sum_{m\leq n\leq -d}s(n)a_m(n)b_d(-n)=a_m(-d)+b_d(-m)+\sum_{m<n<-d}s(n)a_m(n)b_d(-n).\]
By \cite[Lemma 5.5]{zhang2013zagier} we must have $m\neq -d$ and if $m>-d$ and $a_{m}(-d)=-b_{d}(-m)=0$. Therefore, we only need to treat the case when $m<-d$, in which case we have
\[0=a_m(-d)+b_d(-m)+\sum_{m<n<-d}s(n)a_m(n)b_d(-n)=a_m(-d)+b_d(-m),\]
by \cite[Lemma 5.6]{zhang2013zagier}. This completes the proof.
\end{proof}

\begin{Rmk}
The bases $\{f_m\}$ and $\{g_d\}$ form the complete \emph{grids} for Zagier duality in the sense of \cite[Remark 5.8]{zhang2013zagier}. Roughly speaking, such duality exhausts, of course in pairs, all nonzero Fourier coefficients (except the leading coefficients).
\end{Rmk}

\begin{Exa}
We present an example when $8\mid N$. Consider $N=8$, $k=0$ and $\chi=\left(\frac{2}{\cdot}\right)$ (we may also choose the other character $\left(\frac{-2}{\cdot}\right)$). Let $\epsilon=+1$, so $\epsilon^*=+1$. We have the following basis of reduced modular forms $\{f_m\}$ in $A^\epsilon(8,2,\chi)$:
\[\arraycolsep=3.7pt\def\arraystretch{1.1}
\begin{array}{rrrrrrrrrrrr}
f_0 &=& 1/2 &- 2q &- 3q^2 &- 5q^4 &- 2q^6 &- 16q^7& - 9q^8 &- 14q^9 &+ O(q^{10})\\
f_{-1} &=& q^{-1} &- 2q &- 8q^2& + 16q^4& + 48q^6& - 7q^7 &- 96q^8 &+ 18q^9 &+O(q^{10}),\\
f_{-2} &=&\frac{1}{2}q^{-2}& - 4q& + 3q^2& - 28q^4& + 72q^6& + 224q^7& - 168q^8& - 540q^9& +O(q^{10}),\\
f_{-4} &=&\frac{1}{2}q^{-4}&+ 4q& - 14q^2& - 89q^4& - 420q^6& + 1568q^7& - 1460q^8& + 5148q^9& +O(q^{10}),\\
f_{-6} &=&\frac{1}{2}q^{-6}&+8q & + 24q^2& - 280q^4& + 1708q^6& - 7616q^7& - 8016q^8& + 31800q^9 & +O(q^{10}),\\
f_{-7} &=& q^{-7}&- q& + 64q^2& + 896q^4& - 6528q^6& - 128q^7& - 34048q^8& - 18q^9 & +O(q^{10}),\\
& \vdots&&&&&&&&&&
\end{array}
\]
For the dual space $A^{\epsilon^*}(8,0,\chi)$, the basis of reduced modular forms $\{g_d\}$ are:
\[\arraycolsep=1.6pt\def\arraystretch{1.1}
\begin{array}{rrrrrrrrrrrr}
g_{-1} &=&q^{-1}&+ 2 &+ 2q &+ 4q^2 &- 4q^4 &- 8q^6 &+ q^7 &+ 12q^8 &- 2q^9&+O(q^{10}),\\
g_{-2} &=&\frac{1}{2}q^{-2}&+ 3 &+ 8q &- 3q^2 &+ 14q^4 &- 24q^6 &- 64q^7 &+ 42q^8 &+ 120q^9&+O(q^{10}),\\
g_{-4} &=&\frac{1}{2}q^{-4}&+ 5 &- 16q &+ 28q^2 &+ 89q^4 &+ 280q^6 &- 896q^7 &+ 730q^8 &- 2288q^9&+O(q^{10}),\\
g_{-6} &=&\frac{1}{2}q^{-6}&+2 &- 48q &- 72q^2 &+ 420q^4 &- 1708q^6& + 6528q^7 &+ 6012q^8 &- 21200q^9&+O(q^{10}),\\
g_{-7} &=&q^{-7}&+ 16 &+ 7q &- 224q^2 &- 1568q^4 &+ 7616q^6 &+ 128q^7 &+ 29792q^8 &+ 14q^9&+O(q^{10}),\\
& \vdots &&&&&&&&&
\end{array}\]
The duality can be detected from these two tables, by ignoring the first columns and viewing one table horizontally and the other vertically.
\end{Exa}

\section{Hecke Operators and the Differential Operator $D^{1-k}$}\label{vector}

Bruinier and Stein \cite{bruinier2010weil} constructed Hecke operators for modular forms associated to Weil representations. Via the isomorphism, their set of Hecke operators correspond to a subset of the Hecke algebra for $A(N,k,\chi)$ in our setting. With such operators, we will prove some divisibility results for reduced modular forms in next section.

From now on, we denote a positive integer by $r$. We recall the Hecke operators $T(r)$ on $A(N,k,\chi)$ with $(r,N)=1$:  it acts on $f=\sum_na(n)q^n$ by
\[f|_kT(r)=\sum_{n}b(n)q^n,\] with \[b(n)=\sum_{0<d\mid (r,n)}\chi(d)d^{k-1}a(rn/d^2).\]

Denote $R_0$ the subset of $\mathbb Z_{>0}$
\[R_0=\{r\in\mathbb Z_{>0}\colon \chi_p(r)=1 \text{ for each } p\mid N\}.\]
For a sign vector $\epsilon$ and the subspace $A^\epsilon(N,k,\chi)$, we consider the subalgebra $\mathcal R_0$ of the Hecke algebra that is generated by
$\{T(r)\colon r\in R_0\}$. 

\begin{Lem}
The Hecke algebra $\mathcal R_0$ acts on $A^\epsilon(N,k,\chi)$ for each $\epsilon$.
\end{Lem}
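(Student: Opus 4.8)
The plan is to reduce the statement to its generators and then verify the sign-vector condition one Fourier coefficient at a time. Since $\mathcal{R}_0$ is by definition the subalgebra generated by $\{T(r)\colon r\in R_0\}$, and since each $T(r)$ is already known to map $A(N,k,\chi)$ into itself (the classical theory of Hecke operators applies verbatim to weakly holomorphic forms, as the defining $q$-expansion formula does not interfere with the behavior at the cusps), it suffices to check that each individual $T(r)$ with $r\in R_0$ preserves the $\epsilon$-condition. Thus the whole content of the lemma is the assertion that if $f=\sum_n a(n)q^n$ satisfies $a(n)=0$ whenever $\chi_p(n)=-\epsilon_p$ for some $p\mid N$, then the coefficients $b(n)$ of $f|_kT(r)$ enjoy the same vanishing.

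First I would fix an index $n$ together with a prime $p\mid N$ for which $\chi_p(n)=-\epsilon_p$, and aim to show $b(n)=0$. Writing out the formula,
\[b(n) = \sum_{0 < d \mid (r,n)} \chi(d)\, d^{k-1}\, a\!\left(\frac{rn}{d^2}\right),\]
it is enough to prove that each summand vanishes, i.e.\ that $a(rn/d^2)=0$ for every $d\mid (r,n)$. The key observation is that $r\in R_0$ forces $\chi_p(r)=1$ for all $p\mid N$; in particular $\chi_p(r)\neq 0$, so $(r,N)=1$, and any $d\mid r$ then satisfies $(d,N)=1$ as well, whence $\chi_p(d^2)=\chi_p(d)^2=1$.

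With these facts in hand, the crux is a short multiplicativity computation. Because $\chi_p$ is completely multiplicative and $(rn/d^2)\cdot d^2=rn$, we have $\chi_p(rn/d^2)\,\chi_p(d^2)=\chi_p(r)\chi_p(n)$, so that
\[\chi_p\!\left(\frac{rn}{d^2}\right) = \chi_p(r)\,\chi_p(n) = \chi_p(n) = -\epsilon_p,\]
using $\chi_p(d^2)=1$ and $\chi_p(r)=1$. Since $f\in A^\epsilon(N,k,\chi)$, the vanishing condition at the prime $p$ immediately yields $a(rn/d^2)=0$. As this holds for every divisor $d\mid (r,n)$, every term of $b(n)$ vanishes and hence $b(n)=0$. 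This shows $T(r)$ stabilizes $A^\epsilon(N,k,\chi)$ for each $r\in R_0$, and therefore so does the generated algebra $\mathcal{R}_0$.

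I expect no serious technical obstacle; the single point requiring care is the exact role of the set $R_0$. The identity $\chi_p(rn/d^2)=\chi_p(n)$ is precisely what is needed, and it holds exactly because the condition $\chi_p(r)=1$ (rather than merely $(r,N)=1$) has been built into $R_0$. Indeed, the same computation shows that for arbitrary $r$ coprime to $N$ one has $\chi_p(rn/d^2)=\chi_p(r)\chi_p(n)$, so $T(r)$ would in general carry $A^\epsilon$ into $A^{\epsilon'}$ with $\epsilon'_p=\chi_p(r)\epsilon_p$; thus $R_0$ is exactly the set of $r$ for which $T(r)$ stabilizes every $\epsilon$-subspace simultaneously. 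The real subtlety is therefore conceptual—recognizing this stabilizer—rather than computational.
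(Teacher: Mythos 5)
Your proof is correct and follows essentially the same route as the paper: reduce to the generators $T(r)$, $r\in R_0$, and check coefficientwise that $\chi_p(rn/d^2)=\chi_p(r)\chi_p(n)=-\epsilon_p$ forces each term $a(rn/d^2)$ in the Hecke formula to vanish. Your closing remark that $T(r)$ for general $(r,N)=1$ sends $A^\epsilon$ to $A^{\epsilon'}$ with $\epsilon'_p=\chi_p(r)\epsilon_p$ is exactly the paper's Lemma 3.3, so the added commentary is consistent with the paper as well.
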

\begin{proof}
Suppose $f=\sum_na(n)q^n\in A^\epsilon(N,k,\chi)$ and $r\in R_0$. Note first that $f|_kT(r)\in A(N,k,\chi)$. Now assume $f|_kT(r)=\sum_nb(n)q^n$ where 
\[b(n)=\sum_{0<d\mid (r,n)}\chi(d)d^{k-1}a(rn/d^2).\]
Now if for some $p\mid N$ we have $\chi_p(n)=-\epsilon_p$, then for each $d\mid (r,n)$ we must have $\chi_p(rn/d^2)=-\epsilon_p$ because $\chi_p(r)=1$. This implies that $a(rn/d^2)=0$ since $f\in A^\epsilon(N,k,\chi)$. Therefore, $b(n)=0$ and $f|_kT(r)\in A^\epsilon(N,k,\chi)$. This finishes the proof.
\end{proof}

We warn here that $T(p)$ when $p\mid N$ does not act on $A^\epsilon(N,k,\chi)$ even though $f|_kT(p)$ is a scaler multiple of $f|_k\eta_p$ and $\eta_p$ is an involution on $A(N,k,\chi)$ (\cite[Lemma 2.5]{zhang2013zagier} or \cite[Corollary 4.12]{zhang2014isomorphism}).

\begin{Rmk}
The Hecke operators constructed by Bruinier and Stein \cite{bruinier2010weil} generate a subalgebra of $\mathcal R_0$ in our setting ($k\in\mathbb Z$ and $\chi$ is a primitive quadratic Dirichlet character with conductor $N$). More precisely, the Hecke operators $T(r^2)^*$ ($(r,N)=1$) on the vector-valued modular form space becomes $T(r^2)$ for the scalar-valued modular forms. 
\end{Rmk}

\begin{Lem}
Let $r$ be any positive integer with $(r,N)=1$ and $\epsilon=(\epsilon_p)$ be a sign vector. Set $\epsilon'=(\epsilon_p')$ with $\epsilon_p'=\epsilon_p\chi_p(r)$. Then $T(r)$ maps $A^\epsilon(N,k,\chi)$ into $A^{\epsilon'}(N,k,\chi)$.
\end{Lem}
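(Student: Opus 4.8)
The plan is to mimic the proof of Lemma 3.1 and track how the sign-vector condition transforms under $T(r)$ when $\chi_p(r)$ need not equal $1$. First I would observe that $f|_kT(r)\in A(N,k,\chi)$ holds for any $r$ with $(r,N)=1$, since the Hecke operator respects the level, weight, and character; this is standard and we may quote it. The content of the lemma is therefore entirely about which Fourier coefficients are forced to vanish. So I would write $f=\sum_n a(n)q^n\in A^\epsilon(N,k,\chi)$ and $f|_kT(r)=\sum_n b(n)q^n$ with
\[
b(n)=\sum_{0<d\mid(r,n)}\chi(d)d^{k-1}a(rn/d^2),
\]
and aim to show $b(n)=0$ whenever $\chi_p(n)=-\epsilon_p'=-\epsilon_p\chi_p(r)$ for some $p\mid N$.

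The key computation is the following. Fix a prime $p\mid N$ and suppose $\chi_p(n)=-\epsilon_p\chi_p(r)$. For each divisor $d\mid(r,n)$ appearing in the sum, I would compute $\chi_p(rn/d^2)$. Since $\chi_p$ is multiplicative and $\chi_p(d)^2=\chi_p(d^2)$ equals $1$ whenever $\chi_p(d)\neq 0$, we get $\chi_p(rn/d^2)=\chi_p(r)\chi_p(n)\chi_p(d)^{-2}=\chi_p(r)\chi_p(n)$ for every $d$ with $\chi_p(d)\neq 0$. Substituting the hypothesis $\chi_p(n)=-\epsilon_p\chi_p(r)$ and using $\chi_p(r)^2=1$ (which holds because $(r,N)=1$ so $\chi_p(r)=\pm1$), this yields $\chi_p(rn/d^2)=-\epsilon_p$. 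Because $f\in A^\epsilon(N,k,\chi)$, every such term has $a(rn/d^2)=0$.

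There is one subtlety to address carefully, so as not to overlook it: the divisors $d$ for which $\chi_p(d)=0$, i.e.\ $p\mid d$. I would note that if $p\mid d$ and $d\mid(r,n)$, then $p\mid r$, contradicting $(r,N)=1$; hence no such $d$ occurs and $\chi_p(d)=\pm1$ for every divisor $d$ in the sum. This guarantees the cancellation $\chi_p(d)^{-2}=1$ used above is always valid, so every summand vanishes and therefore $b(n)=0$. This confirms $f|_kT(r)\in A^{\epsilon'}(N,k,\chi)$.

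I do not expect a serious obstacle here; the lemma is a direct refinement of Lemma 3.1, and the main thing to get right is bookkeeping with the characters $\chi_p$—in particular verifying that $\chi_p(r)=\pm1$ (not $0$) so that $\epsilon_p'=\epsilon_p\chi_p(r)\in\{\pm1\}$ is a genuine sign vector, and that $p\nmid d$ for all relevant $d$. The case $\chi_p(r)=1$ recovers Lemma 3.1 exactly, providing a useful consistency check on the argument.
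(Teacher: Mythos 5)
Your proof is correct and follows essentially the same route as the paper's: write out the Hecke action on Fourier coefficients, and for any $n$ with $\chi_p(n)=-\epsilon_p'$ show that every term $a(rn/d^2)$ in the sum vanishes because $\chi_p(rn/d^2)=\chi_p(r)\chi_p(n)=-\epsilon_p$. Your explicit verifications that $p\nmid d$ (so $\chi_p(d^2)=1$) and that $\chi_p(r)=\pm1$ are details the paper leaves implicit, but the argument is the same.
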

\begin{proof}
Suppose $f=\sum_na(n)q^n\in A^\epsilon(N,k,\chi)$ and assume $f|_kT(r)=\sum_nb(n)q^n$ where 
\[b(n)=\sum_{0<d\mid (r,n)}\chi(d)d^{k-1}a(rn/d^2).\]
Now if for some $p\mid N$ we have $\chi_p(n)=-\epsilon_p'$, then for each $d\mid (r,n)$ we must have $\chi_p(rn/d^2)=\chi_p(rn)=-\chi_p(r)^2\epsilon_p$. This implies that $a(rn/d^2)=0$ since $f\in A^\epsilon(N,k,\chi)$. Therefore, $b(n)=0$ and $f|_kT(r)\in A^{\epsilon'}(N,k,\chi)$. 
\end{proof}

\begin{Exa}
Consider the case $N=15$ and $\chi=\left(\frac{\cdot}{15}\right)$. There are four distinct sign vectors $\epsilon$:
\[\epsilon_1=(-1,-1),\quad \epsilon_2=(1,-1),\quad \epsilon_3=(-1,1),\quad \epsilon_4=(1,1).\]
Among them, $\epsilon_1$ and $\epsilon_2$ are dual to each other, and $\epsilon_3$ and $\epsilon_4$ are dual to each other.
We consider the cuspform space when $k=3$. We know that $S(15,3,\chi)=\mathbb Cg_1+\mathbb Cg_2$, with
\begin{align*}g_1&=q - 3q^4 - 3q^6 + 9q^9 + 5q^{10} +O(q^{15})\in S^{\epsilon_4}(15,3,\chi),\\
g_2&=q^2 - 3q^3 + 5q^5 - 7q^8 + 9q^{12}+O(q^{15})\in S^{\epsilon_1}(15,3,\chi).
\end{align*} Since both spaces are one-dimensional, $g_1$ and $g_2$ are common eigenfunctions for Hecke operators in $\mathcal R_0$. To verify this numerically, let us add more terms to $g_1=\sum_na(n)q^n$:
\[g_1=q - 3q^4 - 3q^6 + 9q^9 + 5q^{10} - 15q^{15} + 5q^{16} - 22q^{19} + 21q^{24} + 25q^{25} + 2q^{31} - 14q^{34} - 27q^{36}\]\[ - 35q^{40} + 34q^{46} + 49q^{49}+ 42q^{51} - 27q^{54} + 45q^{60} - 118q^{61} + 13q^{64} - 102q^{69} + 66q^{76} + O(q^{77}).\]
Now $4\in R_0$ and we should have $a(4n)=a(4)a(n)$ if $(2,n)=1$. This is clear from above Fourier expansion. Similarly, $19\in R_0$ and one can see that $a(19n)=a(19)a(n)$ if $19\nmid n$.

Moreover, one can verify easily that $g_1|T(2)=g_2$, $g_2|T(2)=g_1$ and $T(2)$ interchanges the $\epsilon_4$-subspace and the $\epsilon_1$-subspace.
\end{Exa}

We finally recall the differential operator $D=q\frac{d}{dq}$. It was treated in many places; for example, one may refer to Zagier's paper \cite{zagier1994modular} for details. We note that in general $D$ destroys the modularity, but it is well-known that when $k\leq 0$,
$D^{1-k}\colon A(N,k,\chi)\rightarrow A(N,2-k,\chi)$. Actually $D^{1-k}$ is a special case of the \emph{Rankin-Cohen bracket}.
Clearly if $f=\sum_na(n)q^n$, then
$D^{1-k}f=\sum_nn^{1-k}a(n)q^n$. In particular, the constant term of $D^{1-k}f$ vanishes. Moreover, one sees that $D^{1-k}(A^\epsilon(N,k,\chi))\subset A^\epsilon(N,2-k,\chi)$.

\section{Divisibility of Fourier Coefficients}

From now on, we shall assume that for any reduced modular form \[f_m=\sum_na_m(n)q^n\in A^\epsilon(N,k,\chi),\] the modular form $s(m)f_m$ has integral Fourier coefficients. Namely $s(m)a_m(n)\in\mathbb Z$ for any $n\in\mathbb Z$.  We remark that such integrality for each fixed reduced modular form is easy to verify numerically by Sturm's theorem (\cite{sturm1987congruence}, \cite[Corollary 3.2 ]{kim2013weakly}). Such integrality holds so far for all of the numerically examples that we have computed except in the case of weight $k=0$. For example, when $N=17$ and $\epsilon=+1$, $s(m)f_m$ may contain half-integral constant terms (see Mayer's computation \cite[Section 5.1.3]{mayer2007hilbert}). Such exception seems to be related to the fact that constant functions are modular functions for $\Gamma_1(N)$. Even in case of such exception, one can adjust (or just ignore) the divisibility by $2$-powers.

It is also noteworthy that a different type of integrality $s(n)a(n)\in\mathbb Z$ was needed and treated partially in \cite{kim2013weakly, zhang2013zagier}. Actually, for each level $N$ and weight $k$, the integrality boils down to that of finitely many reduced modular forms (\cite[Lemma 6.1]{zhang2013zagier}) and for each fixed reduced modular form, this type of integrality can be verified numerically using Sturm's Theorem.

We shall also assume the existence whenever we write $f_m$ in this section. Alternatively, if $f_m$ does not exist, we may just understand $f_m=0$.

\subsection{Constant Terms and a Result of Siegel}
We first generalize a result of Siegel to higher level reduced modular forms.
Siegel considered the constant terms of weakly holomorphic modular forms of level one and of negative weight and proved the his divisibility result (see \cite{siegel1969berechnung,duke2008zeros}). 
More precisely, if $f_m=\sum_{n}a_m(n)q^n$ denote a reduced modular form of weight $2-k<0$, level one and if $\ell$ is the dimension of the space of cuspforms of weight $k$ (the dual space), then $p\mid a_{-l-1}(0)$ whenever $(p-1)\mid k$. By the duality, this amounts to saying that
\[p\mid a_0(\ell+1),\quad \text{ whenever } (p-1)\mid k.\]
For example, when $k=12$, we have $\ell=1$ and
\[f_{0}=1+196560q^2+16773120q^3+398034000q^4+O(q^5).\]
Here $(p-1)\mid 12$ if $p=2,3,5,7,13$ and we have
\[a_0(2)=196560=2^4\cdot 3^3\cdot 5\cdot 7\cdot 13.\]

We pass to the case $k\geq 2$ via the duality. To generalize this result, we need the Dirichlet $L$-function $L(s,\chi)$ at negative integers.
\begin{Lem}
Let $k\geq 2$ and
\[f_0=\sum_na_0(n)q^n\in A^\epsilon(N,k,\chi)\]
be the reduced modular form of order $0$. Then $r\mid s(0)a_0(n)$ if $n>0$, where $r$ is determined by
\[L(1-k,\chi)=\frac{s}{r},\quad r,s\in\mathbb Z, (r,s)=1.\]
\end{Lem}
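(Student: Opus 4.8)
The statement concerns a reduced modular form $f_0$ of weight $k\geq 2$ with sign vector $\epsilon$, and claims that the denominator $r$ of the special $L$-value $L(1-k,\chi)$ divides $s(0)a_0(n)$ for all $n>0$. The natural strategy is to compare $f_0$ with an explicit Eisenstein-type series whose constant term is essentially $1$ (after normalization) and whose higher coefficients carry the arithmetic of $L(1-k,\chi)$. The plan is to invoke the isomorphism of Theorem 1.1 to pass to the vector-valued picture $\mathcal A^{\text{inv}}(k,\rho_D)$, where the Eisenstein series $E_{k,0}$ attached to the trivial coset $\gamma=0$ is well understood: its Fourier coefficients are given, up to elementary factors, by $L$-values and divisor-type sums, and its constant term in the $e_0$-component is $1$.

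First I would set up the Eisenstein series $E = E_{k,0}\in\mathcal M^{\text{inv}}(k,\rho_D)$ normalized so that its $e_0$-component has constant term $1$, and translate it back through the isomorphism to a scalar-valued holomorphic form $\mathcal E\in M^\epsilon(N,k,\chi)$ whose constant term is $s(0)^{-1}$ and whose positive Fourier coefficients are (up to the rational factor coming from $L(1-k,\chi)$) integers. Concretely, the $n$-th coefficient of the classical Eisenstein series of weight $k$ and character $\chi$ has the shape $c\cdot\sum_{d\mid n}\chi(d)d^{k-1}$ where the normalizing constant $c$ is $-2k/B_{k,\chi}$ and $B_{k,\chi}$ is the generalized Bernoulli number; the key classical identity is $L(1-k,\chi)=-B_{k,\chi}/k$, so that the reciprocal of the $L$-value sits exactly in front of the integral divisor sums. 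Thus $s(0)\,\mathcal E$ has constant term $1$ and all positive coefficients lying in $\tfrac{1}{r}\mathbb Z$.

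The second step is the comparison: both $s(0)f_0$ and $s(0)\mathcal E$ are elements of $A^\epsilon(N,k,\chi)$ (in fact holomorphic, since $f_0$ has order $0$) with constant term $1$, so their difference $h = s(0)(f_0-\mathcal E)$ lies in $M^\epsilon(N,k,\chi)$ and has vanishing constant term, hence $h\in S^\epsilon(N,k,\chi)$. By the integrality assumption standing throughout this section, $s(0)f_0$ has integral Fourier coefficients; and by construction the positive coefficients of $s(0)\mathcal E$ lie in $\tfrac1r\mathbb Z$. To conclude $r\mid s(0)a_0(n)$, I would show that $h$ itself has coefficients in $\tfrac1r\mathbb Z$, which reduces the problem to the reduced modular form $f_0$ being, modulo the integral cusp form lattice, congruent to the Eisenstein part. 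The cleanest route, following Duke--Jenkins, is to observe that $S^\epsilon(N,k,\chi)$ has an integral basis (or at worst a basis with bounded denominators coprime to $r$), so that an integral-coefficient form whose coefficients are known to lie in $\tfrac1r\mathbb Z$ actually has its Eisenstein-free part contributing nothing $p$-adically at the primes dividing $r$; combined with the $L$-value identity this forces $r\mid s(0)a_0(n)$.

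The main obstacle is controlling the denominators of a cusp-form basis for $S^\epsilon(N,k,\chi)$ relative to $r$: one must ensure that no prime $p\mid r$ appears in the denominators of the chosen basis, so that the $L$-value denominator genuinely survives in $a_0(n)$ rather than being cancelled by the cuspidal contribution. This is precisely where the von Staudt--Clausen style control of the denominator of $B_{k,\chi}$, i.e.\ of $r$, must be matched against the structure of the integral cusp-form lattice; in the level-one case Duke and Jenkins handle this via the $q$-expansion principle and the integrality of the Miller basis, and here the sign-vector refinement together with the standing integrality hypothesis on $s(m)f_m$ should play the analogous role. A secondary subtlety is verifying that the Eisenstein series $E_{k,0}$ really is $\text{Aut}(D)$-invariant and lands in the correct $\epsilon$-subspace under the isomorphism, which follows from the explicit coefficient formula and the definition of the $\epsilon$-condition but should be checked carefully when $8\mid N$.
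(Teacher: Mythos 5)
Your opening move parallels the paper's: produce an Eisenstein series in $M^\epsilon(N,k,\chi)$ with constant term $1/s(0)$ whose higher coefficients carry the arithmetic of $L(1-k,\chi)$ (the paper takes the scalar-valued $E^\epsilon=\frac{1}{s(0)L(1-k,\chi)}\sum_{0<m\mid N}\epsilon_m E_m$ from \cite{zhang2013zagier} rather than pulling back a vector-valued $E_{k,0}$, but that difference is cosmetic). Two things go wrong afterwards. First, the arithmetic of the Eisenstein coefficients is stated backwards: writing $L(1-k,\chi)=s/r$ with $(r,s)=1$, the positive coefficients of your normalized $s(0)\mathcal E$ are $\tfrac{2}{L(1-k,\chi)}\cdot(\text{integral divisor sums})=\tfrac{2r}{s}\cdot(\text{integers})$, so they lie in $\tfrac{r}{s}\mathbb Z$, not in $\tfrac1r\mathbb Z$; the primes of $r$ sit in the \emph{numerators} of these coefficients, and that numerator is the entire source of the divisibility to be proved. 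Correspondingly, what you must show about the cuspidal difference $h=s(0)(f_0-\mathcal E)$ is not that its denominators avoid $r$, but that its coefficients also lie in $\tfrac rs\mathbb Z$ (equivalently, $L(1-k,\chi)h$ is integral); only then does $s(0)a_0(n)\in\mathbb Z\cap\tfrac rs\mathbb Z=r\mathbb Z$ follow.

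Second, and more seriously, your proposed closing step---an integral (or $r$-coprime-denominator) basis of $S^\epsilon(N,k,\chi)$---cannot work, because it never uses that $f_0$ is \emph{reduced}, and the statement is false without reducedness: $f_0+g$, with $g\in S^\epsilon$ integral of leading coefficient $1$, satisfies every hypothesis you actually invoke, yet its coefficients are not all divisible by $r$ in general. The paper closes the gap by the triangularity built into the definition of reduced forms: any $f=\sum_n a(n)q^n\in M^\epsilon(N,k,\chi)$ expands as $f=\sum_{m\geq 0}a(m)s(m)f_m$, i.e.\ the expansion coefficients \emph{are} Fourier coefficients of $f$ itself. Applying this to $f=E^\epsilon=\sum_nB(n)q^n$ and using $B(0)s(0)=1$ gives, for $n>0$,
\[
a_0(n)=B(n)-\sum_{m>0}B(m)s(m)a_m(n),
\]
and multiplying by $s(0)L(1-k,\chi)$ makes every term on the right an integer, since $s(0)L(1-k,\chi)B(m)\in\mathbb Z$ for $m>0$ and $s(m)a_m(n)\in\mathbb Z$ by the standing integrality assumption; hence $L(1-k,\chi)s(0)a_0(n)\in\mathbb Z$, which is exactly $r\mid s(0)a_0(n)$. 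In your language: the coefficients of $h$ along the reduced cusp basis are, because reducedness forces $a_0(m)=0$ at the leading exponents $m$ of cusp forms, equal to $-s(0)B(m)\in\tfrac rs\mathbb Z$, and integrality of the $s(m)f_m$ then spreads the factor $r$ to all coefficients. This identification of the cuspidal expansion coefficients with Eisenstein coefficients of $E^\epsilon$ is the missing idea; denominator bounds on a cusp-form basis alone cannot supply it.
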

\begin{proof}
By \cite[Lemma 4.3]{zhang2013zagier}, the Eisenstein space in $M^\epsilon(N,k,\chi)$ is one-dimensional and generated by $E^\epsilon$, where 
\[E^\epsilon=\frac{1}{s(0)L(1-k,\chi)}\sum_{0<m\mid N}\epsilon_mE_m.\]
For the meaning of notations, see \cite[Section 4]{zhang2013zagier} or \cite[Chapter 4 ]{diamond2005first}. This implies the existence of $f_0$. For $E^\epsilon$, we just need the fact that when $m>1$, $E_m$ and $E_1-L(1-k,\chi)$ vanish at $\infty$ and has integral coefficients. In other words, $s(0)L(1-k,\chi)E^\epsilon\in L(1-k,\chi)+q\mathbb Z[[q]]$.

Since reduced modular forms form a basis, we see that for any $f=\sum_n a(n)q^n\in M^\epsilon(N,k,\chi)$,
\[f=\sum_{m\geq 0}a(m)s(m)f_m,\]
where if $f_m$ does not exists, we understand $f_m=0$. Obviously, the right side is a finite sum.
Now we compare coefficients of $q^n$ for $n>0$ on both sides, and we obtain
\[a(n)=\sum_{m\geq 0}a(m)s(m)a_m(n).\] We then substitute $E^\epsilon=\sum_nB(n)q^n=\frac{1}{s(0)}+O(q)$ for $f$ in above equation, so
\[B(n)=\sum_{m\geq 0}B(m)s(m)a_m(n),\quad\text{or}\quad B(0)s(0)a_0(n)=-\sum_{m>0}B(m)s(m)a_m(n)+B(n).\] Since if $n>0$, $s(0)L(1-k,\chi)B(n)\in\mathbb Z$, and $s(m)a_m(n)\in\mathbb Z$ for any $m,n$, we must have
\[L(1-k,\chi)s(0)a_0(n)=L(1-k,\chi)s(0)B(0)s(0)a_0(n)\in\mathbb Z.\]
This finishes the proof.
\end{proof}

\begin{Thm}
We keep notations in Lemma 4.1 and assume that $N$ is a prime power and $n>0$.
\begin{enumerate}
\item Assume $N=p>2$, $p^\nu||k$ and $t\imod p$ is a primitive root. Then $p^{\nu+1}\mid s(0)a_0(n)$ if
\begin{itemize}
	\item $\chi(t)=1$ and $(p-1)\mid k$, or
	\item $\chi(t)=-1$ and $(p-1,k)=\frac{p-1}{2}$.
\end{itemize} 
\item If $N=4$ and $2\nmid k$, then $2\mid s(0)a_0(n)$.
\end{enumerate}
\end{Thm}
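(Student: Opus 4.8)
By Lemma~4.1, the quantity $r$ dividing $s(0)a_0(n)$ (for $n>0$) is exactly the denominator of $L(1-k,\chi)$ in lowest terms. So the theorem reduces to a purely arithmetic statement about the denominator of the special $L$-value $L(1-k,\chi)$ when $N=p^\nu$ is a prime power. The plan is to express $L(1-k,\chi)$ via generalized Bernoulli numbers and then analyze the $p$-adic valuation of the denominator using a von Staudt--Clausen type theorem for generalized Bernoulli numbers.

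**Key steps.** First I would recall that for a primitive Dirichlet character $\chi$ of conductor $N$, one has $L(1-k,\chi)=-\frac{B_{k,\chi}}{k}$, where $B_{k,\chi}$ is the $k$th generalized Bernoulli number attached to $\chi$. Thus the denominator $r$ of $L(1-k,\chi)$ is controlled by two competing contributions: the denominator coming from $B_{k,\chi}$ itself, and the factor $k$ in the numerator which can \emph{cancel} powers of $p$ in that denominator. The crux is therefore a precise description of the $p$-part of the denominator of $B_{k,\chi}/k$. The standard input is the Carlitz/Leopoldt generalization of the von Staudt--Clausen theorem: for a nontrivial character $\chi$ of conductor $f_\chi$, a prime $p$ contributes to the denominator of $B_{k,\chi}$ precisely when $\chi\omega^{-k}$ is trivial on the relevant group, where $\omega$ is the Teichm\"uller character; concretely, for $p\mid N$ the relevant condition is that the restriction $\chi_p$ agrees with $\omega^k$ modulo $p$, which translates into the congruence conditions $(p-1)\mid k$ (when $\chi_p$ is trivial on squares, i.e. $\chi(t)=1$ for a primitive root $t$) or $(p-1,k)=\tfrac{p-1}{2}$ (when $\chi(t)=-1$).

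**Carrying it out.** For part~(1), write $N=p$ an odd prime and let $t$ be a primitive root mod $p$. Since $\chi$ is quadratic, $\chi(t)=\pm1$ determines $\chi=\omega^{(p-1)/2}$ in the even case $\chi(t)=1$, or $\chi=\omega^{0}$-shifted appropriately in the odd case; the decomposition of the group of Dirichlet characters mod $p$ as powers of $\omega$ makes this explicit. I would then compute $v_p(B_{k,\chi})$: the denominator contributes a single factor of $p$ exactly when the character $\chi\omega^{-k}$ (mod $p$) is trivial, which is the stated congruence. Writing $p^\nu\|k$, the factor $k$ in $B_{k,\chi}/k$ then supplies a further $p^\nu$ in the denominator (no cancellation occurs because the numerator $B_{k,\chi}$ carries exactly one factor $p$ in its own denominator, so dividing by $k$ raises the $p$-power of the denominator by $\nu$). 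Assembling these gives $v_p(r)=\nu+1$, i.e. $p^{\nu+1}\mid r\mid s(0)a_0(n)$. For part~(2), $N=4$ forces $\chi=\left(\tfrac{-1}{\cdot}\right)$ the nontrivial character mod $4$; here $L(1-k,\chi)=-B_{k,\chi}/k$ with $\chi$ odd, and for odd $k$ the generalized Bernoulli number $B_{k,\chi}$ is the relevant nonvanishing value. A direct von Staudt--Clausen analysis at $p=2$ shows the denominator is even when $2\nmid k$, giving $2\mid r$.

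**Main obstacle.** The delicate point is \emph{the cancellation bookkeeping}: determining the exact power of $p$ in the denominator of $B_{k,\chi}/k$ rather than just of $B_{k,\chi}$. One must verify that the von Staudt--Clausen contribution to $B_{k,\chi}$ is exactly $p^1$ (not higher) under the congruence hypotheses, so that division by $k$ with $p^\nu\|k$ produces denominator $p^{\nu+1}$ with no unexpected cancellation from the numerator. This requires the sharp form of the generalized von Staudt--Clausen theorem giving the precise $p$-adic valuation, together with a careful check that $p\nmid$ (the numerator of $B_{k,\chi}$) under these conditions; the Kummer-congruence structure of $p$-adic $L$-functions is the cleanest tool, but I would need to be careful about the small-prime case $p=2$ and about the degenerate congruence $(p-1,k)=\tfrac{p-1}{2}$ where $\chi$ is odd.
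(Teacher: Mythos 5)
Your proposal takes essentially the same route as the paper: reduce via Lemma 4.1 to computing the denominator of $L(1-k,\chi)=-\frac{B_{k,\chi}}{k}$, then invoke Carlitz's von Staudt--Clausen-type results on generalized Bernoulli numbers and translate the triviality of $\chi\omega^{-k}$ into the stated congruence conditions on $k$. The paper disposes of your ``cancellation bookkeeping'' obstacle by citing Carlitz's Theorems 1 and 3 in the sharp form that applies to $-\frac{B_{k,\chi}}{k}$ itself (namely that it is congruent to $\frac{1-p}{p^{\nu+1}}$, resp.\ $\frac{1}{2}$, modulo $1$), so the exact denominator, including the contribution of the factor $k$, comes directly from the citation rather than from a separate valuation argument.
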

\begin{proof}
By Lemma 4.1, we only have to show that in these two cases, the denominator of $L(1-k,\chi)$ is $p^{\nu+1}$ and $2$ respectively. It is well-known that $L(1-k,\chi)=-\frac{B_{k,\chi}}{k}$, where $B_{k,\chi}$ is the $k$-th generalized Bernoulli number associated to the character $\chi$. By a result of Carlitz (\cite[Theorem 1,3]{carlitz1959arithmetic}), we know that if $N=4$ and $2\nmid k$, then $-\frac{B_{k,\chi}}{k}\equiv \frac{1}{2}\imod 1$. If $N=p>2$ and $p\mid 1-\chi(t)t^k$, then $-\frac{B_{k,\chi}}{k}\equiv \frac{1-p}{p^{\nu+1}}\imod 1$, and the theorem follows. 
\end{proof}

\begin{Exa} We note that Lemma 4.1, hence Theorem 4.2, is independent of the sign vector $\epsilon$.
Consider the case when $N=13$ and $k=6$. We see that $2\imod 13$ is a primitive root and $\chi(2)=-1$. Hence by Theorem 4.2, $13\mid 2a_0^\epsilon(n)$ for any $n>0$ where $f_0^\epsilon=\sum_na_0^\epsilon(n)q^n$. The reduced modular form $f_0^+$ when $\epsilon=+1$ is 
\[f_0^+=\frac 12 - 26q^9 - 39q^{10} - 91q^{12} - 78q^{13} - 195q^{14} - 390q^{16} - 546q^{17} + O(q^{20}),\]
and $f_0^-$ when $\epsilon=-1$ is 
\[f_0^-=\frac 12+ 13q^7 + 13q^8 + 65q^{11} + 65q^{13} + 286q^{15} + 728q^{18} + 1001q^{19}+ O(q^{20}).\]
\end{Exa}

\begin{Rmk}
In the degenerate case $N=1$, the above divisibility extends Siegel's observation. For example, when $k=12$, we have $p\mid a_0(n)$. and $p=2,3,5,7,13$. More precisely, we should have that $2^3, 3^2, 5, 7$ and $13$ divide $a_0(n)$ by the well-known \emph{Staudt-Clausen theorem}. Note that
\[a_0(3)=16773120=2^{12}\cdot 3^2\cdot 5\cdot 7\cdot 13\quad\text{and}\quad a_0(4)=398034000=2^{4}\cdot 3^7\cdot 5^3\cdot 7\cdot 13.\]
\end{Rmk}

We are interested in the particular case when $N>1$ is a fundamental discriminant, $\chi=\left(\frac{N}{\cdot}\right)$, $k=0$, and $\epsilon$ is such that $\epsilon_p=\chi_p(-1)$. In Borcherds's theory of automorphic products, a weakly holomorphic modular form $f\in A^\epsilon(N,k,\chi)$ is lifted to a Hilbert modular form $\Psi_f$ for $\mathbb{Q}(\sqrt N)$ (see \cite[Theorem 13.3]{borcherds1998automorphic}, \cite[ Theorem 9]{bruinier2003borcherds}, and \cite[Theorem 4.1]{kim2013weakly}). 

\begin{Cor} Keep notations in Lemma 4.1 and assume $N=5$, $2-k=0$ and $\epsilon=+1$. Then the weight of $\Psi_{f_m}$ ($m<0$) is divisible by $5$.
\end{Cor}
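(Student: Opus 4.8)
The plan is to connect the weight of the Borcherds lift $\Psi_{f_m}$ to the constant term of $f_m$ and then apply the divisibility established in Theorem 4.2. By Borcherds's theory of automorphic products, for $f=\sum_n a(n)q^n\in A^\epsilon(N,0,\chi)$ with $N=5$, $\chi=\left(\frac{5}{\cdot}\right)$, and $\epsilon=+1$, the resulting Hilbert modular form $\Psi_f$ has weight equal to the constant term $a(0)$ (up to the normalization conventions in \cite{borcherds1998automorphic,bruinier2003borcherds,kim2013weakly}; in particular the weight is $a(0)/2$ or $a(0)$ depending on the reference, but in either case an integer multiple of the constant term scaled by $s(0)$). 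So the first step is to identify the weight of $\Psi_{f_m}$ with the constant term $a_m(0)$ of the reduced modular form $f_m\in A^\epsilon(5,0,\chi)$.

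The second step is to invoke the duality between weight $k$ and weight $2-k$. Here $2-k=0$ means $k=2$, so the dual space is $A^{\epsilon^*}(5,2,\chi)$. Since $\epsilon=+1$ and $N=5$, one checks that $\epsilon^*=\epsilon=+1$ because $\chi_5(-1)=\left(\frac{-1}{5}\right)=1$. By Theorem 4.2 applied with $N=p=5$, $k=2$, I need the divisibility condition to hold: with $t$ a primitive root mod $5$ (e.g.\ $t=2$), we have $\chi(2)=\left(\frac{2}{5}\right)=-1$, and $(p-1,k)=(4,2)=2=\frac{p-1}{2}$, so the second bullet of Theorem 4.2(1) applies. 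This gives $p^{\nu+1}=5^{0+1}=5$ dividing $s(0)a_0(n)$ for all $n>0$, where $a_0(n)$ are the coefficients of the order-$0$ reduced form in the weight-$2$ space.

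The third step is to transfer this back through Zagier duality (Theorem 2.3). The constant term $a_m(0)$ of $f_m\in A^\epsilon(5,0,\chi)$ with $m<0$ is, up to sign and the factor $s$, one of the coefficients $a_0(-m)$ of the weight-$2$ reduced form $g_0$ via $a_m(0)=-b_0(-m)$ in the notation of Theorem 2.3 (the $d=0$ pairing). Since $-m>0$, Theorem 4.2 forces $5\mid s(0)b_0(-m)$, hence $5$ divides $s(0)a_m(0)$, i.e.\ $5$ divides the (appropriately normalized) constant term of $f_m$. Because $s(0)=1$ when $N=5$ (as $5$ is prime and $\omega((0,5))$ contributes a single factor), this yields $5\mid a_m(0)$ directly, and therefore the weight of $\Psi_{f_m}$ is divisible by $5$.

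The main obstacle I anticipate is bookkeeping of the normalization constants and sign-vector identifications rather than any deep difficulty: one must carefully verify that $\epsilon=+1$ in weight $0$ pairs (under both Zagier duality and the dualization $\epsilon\mapsto\epsilon^*$) with exactly the weight-$2$ space to which Theorem 4.2 applies, and that the factor $s(0)$ and the precise relationship between Borcherds's weight and the constant term introduce no spurious powers of $2$ that would obstruct divisibility by the odd prime $5$. Since the relevant prime is $5$ and all ambiguities are confined to powers of $2$ and to overall signs, these do not affect the conclusion, so the argument should go through cleanly once the dictionary between the two weight spaces is pinned down.
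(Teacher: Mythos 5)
Your overall route is the same as the paper's: check the hypotheses of Theorem 4.2 for $k=2$, $N=5$ (primitive root $t=2$, $\chi(2)=-1$, $(4,2)=\tfrac{5-1}{2}$), transfer the divisibility of the weight-$2$ coefficients to the constant terms of the weight-$0$ forms via Zagier duality ($a_m(0)=-b_0(-m)$, and $-m>0$), and identify the weight of $\Psi_{f_m}$ with the constant term. However, there is a concrete error in your bookkeeping: $s(0)=2^{\omega((0,5))}=2^{\omega(5)}=2$, not $1$ (your own parenthetical remark, that $\omega((0,5))$ ``contributes a single factor,'' already says $s(0)=2$). So what Theorem 4.2 plus duality actually gives is $5\mid 2a_m(0)$, not $5\mid a_m(0)$ ``directly.''

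This matters because the passage from $5\mid 2a_m(0)$ to $5\mid a_m(0)$ requires knowing $a_m(0)\in\mathbb Z$, and that is not automatic from the standing integrality assumption of Section 4: that assumption only gives $s(m)a_m(n)\in\mathbb Z$, and when $5\mid m$ one has $s(m)=2$ (indeed $f_{-5}=\tfrac12 q^{-5}+15+\cdots$ has a genuinely half-integral leading coefficient), so a priori $a_m(0)$ could be a half-integer, in which case ``$5$ divides the weight $a_m(0)$'' would not even be a statement about integers. The paper closes exactly this gap by remarking that $a_m(0)\in\mathbb Z$ can be proved via the duality together with Sturm's theorem, and then the weight formula (from Bruinier--Bundschuh or Kim--Lee) gives weight $=s(0)a_m(0)/2=a_m(0)$, whence $5\mid a_m(0)$. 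Your closing hedge---that spurious powers of $2$ cannot obstruct divisibility by the odd prime $5$---is the right instinct, but it is only valid once the integrality of $a_m(0)$ is established; as written, your argument proves divisibility by $5$ only in $\mathbb Z[1/2]$, not in $\mathbb Z$. (Also a small citation slip: the duality is Theorem 2.2 in the paper, not 2.3.)
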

\begin{proof}
Note $k=2$ and $t=2$ is a primitive root mod $5$, and one checks that $\chi(2)=-1$ and $(\frac{5-1}{2},2)=\frac{5-1}{2}$. So by Theorem 4.2 and Zagier duality (Theorem 2.2), we have $5\mid 2a_m(0)$ since $\nu=0$. Actually, one can show that $a_m(0)\in\mathbb Z$, hence $5\mid a_m(0)$, via the duality and Sturm's theorem as discussed at the beginning of this section. Then by \cite[Theorem 9]{bruinier2003borcherds} or \cite[Theorem 4.1]{kim2013weakly}, we have the weight of $\Psi_{f_m}$ is given by $s(0)a_m(0)/2=a_m(0)$, so $5$ divides the weight.
\end{proof}

\begin{Rmk}
The divisibility by $5$ in Corollary 4.5 can be seen numerically from Bruinier and Bundschuh's computation \cite{bruinier2003borcherds}. Such divisibility can also be proved from the Eisenstein series $f_0=E^\epsilon$ via the duality. This is the only divisibility we can obtain for the weights of $\Psi_{f_m}$, because $k=2$. Actually, by \cite[Theorem 3]{carlitz1959arithmetic}, if $N$ is composite, $L(-1,\chi)$ is integral. If $N=p>2$, then Theorem 4.2 implies that $\frac{p-1}{2}\mid 2$, hence $p\leq 5$.
\end{Rmk}

\subsection{Divisibility for Reduced Modular Forms with $k\leq 0$}
We now state some divisibility results in an ``orthogonal" direction in the sense of Zagier duality, generalizing the divisibility results by Duke and Jenkins \cite{duke2008zeros}. While Siegel's result and its generalization for $f_0$ takes place when $k\geq 2$, the divisibility we now consider is for reduced modular forms of weight $k\leq 0$. We shall try two ways of obtaining such divisibility results, one by applying Hecke operators and one by applying the differential operator $D^{1-k}$.

Recall that we denote $f_m^\epsilon=\sum_na_m^\epsilon(n)q^n$ the reduced modular form of order $m$, if it exists, in $A^\epsilon(N,k,\chi)$.

\begin{Thm}
Let $m,n\in\mathbb Z$ and $r$ be a positive integer with $(r,mnN)=1$. We assume $k\leq 0$ and for some $\epsilon$, \[S^{\epsilon^*}(N,2-k,\chi)=S^{(\epsilon')^*}(N,2-k,\chi)=\{0\}\] where $\epsilon'$ is determined by $\epsilon_p'=\epsilon_p\chi_p(r)$. Then if $f_{m}^\epsilon$ exists,
$r^{1-k}\mid s(mr)a_{mr}^{\epsilon'}(n)$.
\end{Thm}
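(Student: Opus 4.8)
The plan is to show that the Hecke image $f_m^\epsilon|_kT(r)$ is, up to an explicit scalar, the single reduced form $f_{mr}^{\epsilon'}$, and then to extract the divisibility from the standing integrality assumption.

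First I would apply Lemma 3.3: since $(r,N)=1$ and $\epsilon_p'=\epsilon_p\chi_p(r)$, we have $h:=f_m^\epsilon|_kT(r)\in A^{\epsilon'}(N,k,\chi)$. Writing $h=\sum_\ell b(\ell)q^\ell$ with $b(\ell)=\sum_{0<d\mid(r,\ell)}\chi(d)d^{k-1}a_m^\epsilon(r\ell/d^2)$, and using that $a_m^\epsilon(j)=0$ for $j<m$ together with $(r,m)=1$, a short estimate shows that the most negative exponent occurring is $\ell=mr$, with $b(mr)=\chi(r)r^{k-1}a_m^\epsilon(m)=\chi(r)r^{k-1}/s(m)$. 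Because $(r,N)=1$ forces $s(mr)=s(m)$, the form $\chi(r)r^{k-1}f_{mr}^{\epsilon'}$ has the same leading term; in particular $mr$ is a non-gap position for $\epsilon'$, so $f_{mr}^{\epsilon'}$ exists.

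Next I would prove $h=\chi(r)r^{k-1}f_{mr}^{\epsilon'}$. Expanding $h$ in the reduced basis $\{f_j^{\epsilon'}\}$ (a finite combination, since the non-gap positions in the relevant range are finite in number) and using that each $f_j^{\epsilon'}$ has vanishing coefficients at every non-gap position exceeding $j$, a triangularity argument reduces the claim to showing $b(n_0)=0$ for every non-gap position $n_0>mr$ of $A^{\epsilon'}(N,k,\chi)$. Since $k\le 0$ and $\chi$ is nontrivial, there are no nonzero holomorphic forms of weight $k$, so every position $n_0\ge 0$ is a gap; hence only negative $n_0$ need be treated. This is where both vanishing hypotheses enter: triviality of $S^{(\epsilon')^*}(N,2-k,\chi)$ makes the grid for $A^{\epsilon'}$ complete, so that its non-gap positions are exactly the $\epsilon'$-allowed negative integers, while triviality of $S^{\epsilon^*}(N,2-k,\chi)$ makes every $\epsilon$-allowed negative integer a non-gap position for $\epsilon$. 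Granting this, fix such an $n_0<0$ and a term $j=rn_0/d^2$ with $d\mid(r,n_0)$ occurring in $b(n_0)$. Since $(r,N)=1$ we get $\chi_p(j)=\chi_p(r)\chi_p(n_0)\in\{0,\epsilon_p\}$, so $j$ is $\epsilon$-allowed and negative, hence a non-gap position for $\epsilon$; a coprimality check excludes $j=m$ when $n_0>mr$, so either $j>m$ and $a_m^\epsilon(j)=0$ by reducedness of $f_m^\epsilon$, or $j<m$ and $a_m^\epsilon(j)=0$ by support. Therefore $b(n_0)=0$, and $h=\chi(r)r^{k-1}f_{mr}^{\epsilon'}$.

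Finally, comparing coefficients of $q^n$: as $(r,n)=1$, only $d=1$ contributes, giving $a_m^\epsilon(rn)=\chi(r)r^{k-1}a_{mr}^{\epsilon'}(n)$, that is, $s(mr)a_{mr}^{\epsilon'}(n)=\chi(r)r^{1-k}s(m)a_m^\epsilon(rn)$, using $s(mr)=s(m)$ and $\chi(r)=\pm 1$. Since $s(m)a_m^\epsilon(rn)\in\mathbb Z$ by the standing integrality assumption and $1-k\ge 1$, the desired divisibility $r^{1-k}\mid s(mr)a_{mr}^{\epsilon'}(n)$ follows. The leading-term and final-coefficient computations are routine; the main obstacle is the structural input of the third paragraph, namely deducing from the two cusp-form vanishing hypotheses that all allowed negative positions are non-gap for both $\epsilon$ and $\epsilon'$ (completeness of the grids), together with carefully tracking the $\epsilon$-conditions through the Hecke sum.
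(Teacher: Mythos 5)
Your proposal is correct and follows essentially the same route as the paper: apply $T(r)$, use Lemma 3.3 to land in $A^{\epsilon'}(N,k,\chi)$, identify the Hecke image with $\chi(r)r^{k-1}f_{mr}^{\epsilon'}$ via the two cusp-form vanishing hypotheses, and then compare $q^n$-coefficients (only $d=1$ on one side, $d=r$ on the other, by coprimality) to extract the divisibility from the standing integrality assumption. The only cosmetic difference is in the identification step: the paper matches principal parts and invokes $M^{\epsilon'}(N,k,\chi)=\{0\}$ together with the cited existence lemma for $f_{mr}^{\epsilon'}$, whereas you run a coefficient-by-coefficient triangularity argument in the reduced basis, but both rest on exactly the same grid-completeness facts supplied by the vanishing of the obstruction spaces.
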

\begin{proof}
Since $f_m^\epsilon$ exists for a fixed integer $m$ (necessarily negative), by Lemma 5.5 in \cite{zhang2013zagier} and the assumption on the cuspform spaces, we have $f_{mr}^{\epsilon'}$ exists.

Again by the assumption on the cuspform space, we see that any reduced modular form in $A^\epsilon(N,k,\chi)$ satisfies  $f_m=\frac{1}{s(m)}q^m+O(1)$ with $m<0$. By Lemma 3.3, $f_m^\epsilon|_kT(r)\in A^{\epsilon'}(N,k,\chi)$. Therefore, by the Hecke action, we have
\begin{align*}f_m^\epsilon|_kT(r)&=\sum_n\sum_{0<d\mid (r,n)}\chi(d)d^{k-1}a_m^\epsilon(rn/d^2)q^n\\
&=\sum_{0<d\mid r\mid md}\chi(d)d^{k-1}a_m^\epsilon(m)q^{\frac{md^2}{r}}+O(1)\\
&=\sum_{0<d\mid r\mid md}\chi(d)d^{k-1}\frac{s(md^2/r)}{s(m)}f_{md^2/r}^{\epsilon'}+O(1)\\
&=\sum_{0<d\mid r\mid md}\chi(d)d^{k-1}f_{md^2/r}^{\epsilon'},
\end{align*}
where the last equality follows from the fact that $M^{\epsilon'}(N,k,\chi)=\{0\}$ and the second last equality follows from the assumption $S^{(\epsilon')^*}(N,2-k,\chi)=\{0\}$. By comparing the $q^n$-coefficients, this implies that
\[\sum_{0<d\mid (r,n)}\chi(d)d^{k-1}a_m^\epsilon(rn/d^2)=\sum_{0<d\mid r\mid md}\chi(d)d^{k-1}a_{md^2/r}^{\epsilon'}(n).\]
We multiply both sides by $r^{1-k}$ and since $(r,mnN)=1$, we obtain
\[r^{1-k}a_{m}^\epsilon(rn)=a_{mr}^{\epsilon'}(n),\quad\text{or}\quad r^{1-k}s(m)a_{m}^\epsilon(rn)=s(mr)a_{mr}^{\epsilon'}(n).\] This finishes the proof.
\end{proof}

If $r\in R_0$, then $T(r)$ is an operator on $A^\epsilon(N,k,\chi)$ and we have the following corollary. In the following, we shall drop the sign vector $\epsilon$ in the notations.

\begin{Cor}
Let $f_m=\sum_na_m(n)q^n$ be a reduced modular form in $A^\epsilon(N,k,\chi)$ for some $\epsilon$.
Assume $k\leq 0$ and $S^{\epsilon^*}(N,2-k,\chi)=\{0\}$. If $r\in R_0$ and $n\in\mathbb Z$ with $(r,mnN)=1$, we have $r^{1-k}\mid s(mr)a_{mr}(n)$. 
\end{Cor}

If we want to consider each individual prime and then combine the divisibility, we have to apply operators $T(p^2)$ (unless $p\in R_0$), the generators of the Hecke algebra treated in \cite{bruinier2010weil}. In this case we can obtain the following theorem. Note that $r$ and $m/r$ may not be relative prime.

\begin{Thm}
Let $f_m=\sum_na_m(n)q^n$ be a reduced modular form in $A^\epsilon(N,k,\chi)$ for some $\epsilon$.
Assume $k\leq 0$ and $S^{\epsilon^*}(N,2-k,\chi)=\{0\}$. If $m=-\prod_{p}p^{m_p}$, set 
\[r=\prod_{p>2,p\nmid N}p^{r_p}, \quad \text{ with } r_p=\left\{\begin{matrix}
m_p & \text{if } 2\mid m_p,\\
\frac{m_p-1}{2} &\text{if } 2\nmid m_p.
\end{matrix}\right.\] 
Then for any $n$ with $(m,n)=1$, $r^{1-k}\mid s(m)a_{m}(n)$.
\end{Thm}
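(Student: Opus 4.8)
The plan is to prove the divisibility one prime at a time and then multiply the results together. Fix a prime $p>2$ with $p\nmid N$ and $p\mid m$, set $w=m_p=v_p(m)$, and note that $(m,n)=1$ forces $p\nmid n$. Since $S^{\epsilon^*}(N,2-k,\chi)=\{0\}$, every reduced form of negative order exists and has a one-term principal part $f_M=\frac{1}{s(M)}q^M+O(1)$; since $p^2\in R_0$, the operator $T(p^2)$ preserves $A^\epsilon(N,k,\chi)$ by Lemma 3.1. Matching principal parts exactly as in the proof of Theorem 4.7 (using $M^\epsilon(N,k,\chi)=\{0\}$ when $k<0$, and discarding the harmless constant term when $k=0$) gives the clean Hecke relation
\[f_M|_kT(p^2)=p^{2(k-1)}f_{Mp^2}+[v_p(M)\ge1]\,\chi(p)p^{k-1}f_M+[v_p(M)\ge2]\,f_{M/p^2}.\]
Reading off the coefficient of $q^\ell$ on both sides yields, for all $M$ and $\ell$, the master identity
\[a_M(p^2\ell)+[p\mid\ell]\chi(p)p^{k-1}a_M(\ell)+[p^2\mid\ell]p^{2(k-1)}a_M(\ell/p^2)=p^{2(k-1)}a_{Mp^2}(\ell)+[v_p(M)\ge1]\chi(p)p^{k-1}a_M(\ell)+[v_p(M)\ge2]a_{M/p^2}(\ell),\]
which drives the whole argument.

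When $w$ is even the matter reduces to Theorem 4.7. Collecting the even-exponent primes into $r_{\mathrm{ev}}=\prod_{2\mid m_p}p^{m_p}$ and writing $m'=m/r_{\mathrm{ev}}$, we have $(r_{\mathrm{ev}},m'nN)=1$ and, because $\chi_q(r_{\mathrm{ev}})=1$ for every $q\mid N$, the associated sign vector is unchanged, $\epsilon'=\epsilon$. Thus Theorem 4.7 applies directly with base $f_{m'}$ and multiplier $r_{\mathrm{ev}}$ and gives $r_{\mathrm{ev}}^{1-k}\mid s(m)a_m(n)$; equivalently $v_p\bigl(s(m)a_m(n)\bigr)\ge m_p(1-k)$ for each even $m_p$. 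Here the coprimality $(r_{\mathrm{ev}},m')=1$ collapses the master identity to the single clean relation $a_m(n)=\chi(r_{\mathrm{ev}})\,r_{\mathrm{ev}}^{\,1-k}a_{m'}(r_{\mathrm{ev}}n)$ underlying Theorem 4.7, delivering the full power $r_p=m_p$.

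The heart of the proof is the odd case, where $p$ still divides $m/p^{r_p}$ and no coprime reduction is available: inside $A^\epsilon$ we can change the $p$-valuation of the order only in steps of two, since $T(p^{\mathrm{odd}})\notin\mathcal R_0$ moves $\epsilon$ to a sign vector $\epsilon'$ for which $S^{(\epsilon')^*}=\{0\}$ is not assumed. Accordingly I would prove, by strong induction on the odd integer $w=v_p(M)$, the two-variable estimate
\[v_p\bigl(s(M)a_M(\ell)\bigr)\ \ge\ \max\!\Bigl(\tfrac{v_p(M)-1-v_p(\ell)}{2},\,0\Bigr)(1-k)\qquad\bigl(v_p(M)\text{ odd},\ v_p(\ell)\text{ even}\bigr),\]
whose $\ell=n$ instance is exactly $v_p\bigl(s(m)a_m(n)\bigr)\ge\frac{m_p-1}{2}(1-k)$. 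Applying the master identity with $M$ replaced by $M/p^2$ and solving for $a_M(\ell)$ writes $a_M(\ell)$ as $p^{2(1-k)}a_{M/p^2}(p^2\ell)$ plus terms of order-valuation $w-2$ and $w-4$; the decisive simplification is that for $v_p(\ell)\ge2$ and $w\ge3$ the coefficient $[p\mid\ell]-[v_p(M/p^2)\ge1]$ of the middle term vanishes, leaving only an index-raising term $a_{M/p^2}(p^2\ell)$, an index-lowering term $a_{M/p^2}(\ell/p^2)$, and $a_{M/p^4}(\ell)$. All of these have smaller order-valuation, so the induction hypothesis applies, and a short check of the three resulting valuations shows the minimum equals the claimed bound, the binding contribution being the index-lowering term when $v_p(\ell)\ge2$ and the term $\chi(p)p^{1-k}a_{M/p^2}(n)$ when $\ell=n$.

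The main obstacle, and the reason a naive one-variable induction on the order alone collapses, is exactly the index-raising coefficient $a_{M/p^2}(p^2\ell)$: iterating $T(p^2)$ unavoidably creates coefficients whose index is divisible by $p$, so one cannot stay among indices coprime to $p$. Carrying the index-valuation $t=v_p(\ell)$ as a second induction variable is what controls this, since raising $t$ by $2$ costs $2(1-k)$ in the prefactor while lowering the bound $\max(\frac{w-1-t}{2},0)$ by only $1$, so the index-raising branch is never smaller than the index-lowering one. Finally, $s(m)a_m(n)$ is a rational integer and the bounds $v_p\bigl(s(m)a_m(n)\bigr)\ge r_p(1-k)$ have now been established for all the distinct primes $p>2$, $p\nmid N$ dividing $m$; since distinct prime-power divisibilities of an integer combine, we conclude $r^{1-k}\mid s(m)a_m(n)$.
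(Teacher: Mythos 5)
Your proof is correct, and its skeleton --- Hecke operators in $\mathcal R_0$ acting on the $\epsilon$-subspace, matching principal parts against reduced forms via $S^{\epsilon^*}(N,2-k,\chi)=\{0\}$ and $M^\epsilon(N,k,\chi)=\{0\}$, then arguing one prime at a time --- is the paper's skeleton as well; in particular your even-exponent case is literally the paper's (it also reduces to the coprime computation of Theorem 4.7 with $r=p^{m_p}$). Where you genuinely diverge is the odd-exponent case. The paper never iterates the generator $T(p^2)$: it applies the \emph{single large} operator $T(r^2)$, $r=p^{(m_p-1)/2}$, to $f_{m'}$ with $m'=m/r^2$, so that $v_p(m')=1$. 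With $v_p(m')=1$ the divisor condition in the Hecke action admits only two terms, yielding
\[r^{2(1-k)}s(m')a_{m'}(r^2n)=s(m)a_{m}(n)+\chi(p)p^{1-k}s(m/p^2)a_{m/p^2}(n),\]
in which the only index-raised coefficient $a_{m'}(r^2n)$ already carries the explicit prefactor $r^{2(1-k)}=p^{(m_p-1)(1-k)}$, well beyond the target $p^{\frac{(m_p-1)(1-k)}{2}}$; the remaining term has the \emph{same} index $n$ and order-valuation $m_p-2$, so a plain one-variable downward induction on $m_p$ closes the argument in two lines. Iterating $T(p^2)$, as you do, is precisely what creates the index-raising coefficients $a_{M/p^2}(p^2\ell)$ you identify as the main obstacle, and your remedy --- the two-variable estimate $v_p\bigl(s(M)a_M(\ell)\bigr)\ge\max\bigl(\tfrac{v_p(M)-1-v_p(\ell)}{2},0\bigr)(1-k)$ proved by strong induction on the odd order-valuation --- does work: the base case $w=1$ is trivial by integrality, and each of the three branches of your step (index-raising, index-lowering, order-lowering) meets the bound, with the stated binding terms. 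So your argument is a valid but heavier alternative: it buys the aesthetic point that only the generators $T(p^2)$ of the Bruinier--Stein-type subalgebra are ever used, at the cost of a second induction variable that the paper's choice of operator makes unnecessary.

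One small imprecision to fix: it is not true that \emph{every} negative order carries a reduced form in $A^\epsilon(N,k,\chi)$; $f_M$ can exist only if $\chi_q(M)\neq-\epsilon_q$ for every $q\mid N$, since otherwise the $\epsilon$-condition kills the coefficient of $q^M$. What your master identity actually needs --- and what the paper extracts from Lemma 5.5 of \cite{zhang2013zagier} together with the vanishing of the obstruction space --- is that this compatibility, hence existence, is preserved under $M\mapsto Mp^{\pm 2}$ for $p\nmid N$, because $\chi_q(Mp^{\pm2})=\chi_q(M)$. With that sentence inserted, your proof is complete.
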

\begin{proof} The proof is similar to that of Theorem 4.7. For $k\leq 0$, $r\in R_0$ and $f_m=\frac{1}{s(m)}q^m+O(1)$ with $m<0$, we have  Therefore, by the Hecke action above,
\begin{align*}f_m|_kT(r)&=
\sum_{0<d\mid r\mid md}\chi(d)d^{k-1}\frac{s(md^2/r)}{s(m)}f_{md^2/r},
\end{align*}
where the last equality follows from the fact that $M^\epsilon(N,k,\chi)=\{0\}$. By comparing the $q^n$-coefficients, 
\[\sum_{0<d\mid (r,n)}\chi(d)d^{k-1}a_m(rn/d^2)=\sum_{0<d\mid r\mid md}\chi(d)d^{k-1}a_{md^2/r}(n).\]
We then only have to argue locally. Assume that $m=-\prod_pp^{m_p}$. Fix one $p\mid m$ with $p\nmid N$. Assume first that $m_p$ is even and let $r=p^{m_p}$ and $m'=m/r$. We apply the above equality to $m'$ and $r$ and multiply both sides by $r^{1-k}$. Since $(r,m'n)=1$, we obtain
\[r^{1-k}a_{m'}(rn)=a_{m'r}(n)\quad\text{or}\quad r^{1-k}s(m')a_{m'}(rn)=s(m'r)a_{m'r}(n).\]
Clearly we have $r^{1-k}\mid s(m'r)a_{m'r}(n)$. If $m_p$ is odd, then let $r=p^{\frac{m_p-1}{2}}$, $m'=m/r^2$ and apply $T(r^2)$. Similarly we have
\[r^{2(1-k)}a_{m'}(r^2n)=a_{m}(n)+\chi(p)p^{1-k}a_{m/p^2}(n),\] or
\[r^{2(1-k)}s(m')a_{m'}(r^2n)=s(m)a_{m}(n)+\chi(p)p^{1-k}s(m/p^2)a_{m/p^2}(n),\]
where if $p^2\nmid m$, the second term on the right side should be omitted.
By induction on $m_p$, we have $r^{1-k}=p^{\frac{(m_p-1)(1-k)}{2}}\mid s(m)a_{m}(n)$. This finishes the proof.
\end{proof}

\begin{Exa}
Consider the reduced modular forms in Example 2.4 ($N=8$, $k=0$ and $\epsilon=+1$). For $g_{-6}=\sum_nb_{-6}(n)q^n$, Theorem 4.7 applies and we see that $3\mid 2b_{-6}(n)$ if $3\nmid n$. One can also see the divisibility by $7$ of the Fourier coefficients of $g_{-7}$.
\end{Exa}

\begin{Exa}
Consider $f_{-4}$ in the introduction computed by Bruinier and Bundschuh \cite{bruinier2003borcherds} where $N=5$, $k=0$ and $\epsilon=+1$.
Actually all coefficients of $f_{-4}$ are integral from the construction. One can also prove this by computing Sturm's bound. One verifies that if $2\nmid n$, $4\mid a_{-4}(n)$, as predicted by the above theorems.
\end{Exa}

\begin{Exa}
Consider $N=15$, $k=-1$ again and keep notations in Example 3.4.  If $\epsilon_4=[1,1]$, we have
\[\arraycolsep=1.6pt\def\arraystretch{1.1}
\begin{array}{rrrrrrrrrrrr}
f_{-11}^{\epsilon_4}&=\sum_na_{-11}^{\epsilon_4}(n)q^n &=& q^{-4} &- 15& - 47q &+ 92q^4 &+ 498q^6 &- 543q^9  &+O(q^{10}).\\
\end{array}\]
The Fourier coefficients are not divisible by $11$, and the reason is that $(\epsilon'_4)^*=\epsilon_4$ (in Theorem 4.7) and $S^{\epsilon_4}(15,3,\chi)\neq \{0\}$. For another example, consider $r=2$ and $\epsilon_4$. We have $\epsilon_4'=\epsilon_1$ and the assumptions in Theorem 4.7 are valid. Therefore, if $(2,mn)=1$, then
\[2^2\mid s(2m)a_{2m}^{\epsilon_1}(n).\]
For example, this becomes $2\mid a_{-10}^{\epsilon_1}(n)$ if $n$ is odd, for the following reduced modular form:
\[f_{-10}^{\epsilon_1}=\frac{1}{2}q^{-10} - \frac{15}{2} + 45q^2 - 60q^3 + 68q^5 + 410q^8 - 1395q^{12} - 1584q^{15} + 5320q^{17} - 6870q^{18}+O(q^{20}).\]
\end{Exa}

In above theorems, we applied Hecke operators that either preserve or permute the $\epsilon$-subspaces, so the argument does not apply to the Hecke operators $U(p)$ with $p\mid N$. Now we end this paper with the divisibility result obtained by applying the differential operator $D^{1-k}$ where $D=q\frac{d}{dq}$. This includes the divisibility by $p\mid N$ as a special case.

\begin{Thm}
Let $f_{mp}=\sum_{n}a(n)q^n\in A^\epsilon(N,k,\chi)$ be a reduced modular form for $p$ prime, $m\in\mathbb Z$, $k\leq 0$ and a sign vector $\epsilon$. If
\[S^\epsilon(N,2-k,\chi)=S^{\epsilon^*}(N,2-k,\chi)=\{0\},\] then $p^{1-k}\mid s(mp)a(n)$ whenever $p\nmid n$.
\end{Thm}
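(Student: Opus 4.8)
The plan is to apply the differential operator $D^{1-k}$ to $f_{mp}$ and exploit the fact that the two vanishing hypotheses together pin down both the shape of $f_{mp}$ and the target space completely. Since $k\le 0$, recall from Section 3 that $D^{1-k}$ maps $A^\epsilon(N,k,\chi)$ into $A^\epsilon(N,2-k,\chi)$, sends $\sum_n a(n)q^n$ to $\sum_n n^{1-k}a(n)q^n$, and kills the constant term. Thus $h:=D^{1-k}f_{mp}$ is a weight $2-k\ge 2$ form in the \emph{same} $\epsilon$-subspace, with $[q^n]h=n^{1-k}a(n)$ and leading term $\frac{(mp)^{1-k}}{s(mp)}q^{mp}$ (note $mp<0$). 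The goal is to identify $h$ explicitly in terms of the weight $2-k$ reduced basis and then read off the $p$-divisibility.

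First I would show, using $S^{\epsilon^*}(N,2-k,\chi)=\{0\}$, that the principal part of $f_{mp}$ is the single monomial $\frac{1}{s(mp)}q^{mp}$. Indeed, by the Zagier duality grid (Theorem 2.2 and Remark 2.3) every nonzero non-leading coefficient $a(n)$ of $f_{mp}$ is matched with a nonzero coefficient of a reduced form of order $-n$ in $A^{\epsilon^*}(N,2-k,\chi)$; in particular such a reduced form must exist. But with $S^{\epsilon^*}(N,2-k,\chi)=\{0\}$ the Eisenstein subspace is one-dimensional, so the only attainable orders in weight $2-k$ are $\le 0$, and no reduced form of positive order $-n>0$ can exist. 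Hence $a(n)=0$ for every $n<0$ with $n\ne mp$.

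Next, using $S^\epsilon(N,2-k,\chi)=\{0\}$, the same dimension count shows that in $A^\epsilon(N,2-k,\chi)$ the reduced forms $g_d$ exist exactly for $d\le 0$, that each $g_d$ with $d<0$ has vanishing constant term (order $0$ being attainable), and that every $H\in A^\epsilon(N,2-k,\chi)$ expands as $H=\sum_{d\le 0}[q^d]H\,s(d)g_d$. Applying this to $H=h$ and using the previous paragraph (so that $[q^d]h=d^{1-k}a(d)$ vanishes for $d<0$ with $d\ne mp$) together with $[q^0]h=0$, all terms drop except $d=mp$, giving the clean identity $D^{1-k}f_{mp}=(mp)^{1-k}g_{mp}$. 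Comparing $q^n$-coefficients yields $n^{1-k}a(n)=(mp)^{1-k}[q^n]g_{mp}$. Finally I would multiply by $s(mp)$ and invoke the standing integrality assumption (so $s(mp)a(n)\in\mathbb Z$ and $s(mp)g_{mp}$ has integral coefficients): for $p\nmid n$ we have $v_p(n^{1-k})=0$, whence $v_p\big(s(mp)a(n)\big)=v_p\big((mp)^{1-k}s(mp)[q^n]g_{mp}\big)\ge (1-k)\,v_p(mp)\ge 1-k$, which is exactly $p^{1-k}\mid s(mp)a(n)$.

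The main obstacle is the identity $D^{1-k}f_{mp}=(mp)^{1-k}g_{mp}$, and more precisely the two attainability facts feeding it: that $f_{mp}$ has a one-term principal part (where $S^{\epsilon^*}(N,2-k,\chi)=\{0\}$ enters) and that the weight $2-k$ attainable orders are precisely the non-positive integers (where $S^\epsilon(N,2-k,\chi)=\{0\}$ enters). Both reduce to the one-dimensionality of the Eisenstein subspaces and to careful bookkeeping of which reduced forms exist; the delicate point is to make the duality/attainability argument uniform across $k<0$ and the borderline case $k=0$, where the weight $2-k=2$ Eisenstein series and its constant-term normalization must be handled with care.
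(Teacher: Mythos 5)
Your proposal is correct and takes essentially the same route as the paper: apply $D^{1-k}$, use the two cusp-form vanishing hypotheses to establish the key identity $D^{1-k}f_{mp}=(mp)^{1-k}g_{mp}$ with $g_{mp}$ the reduced form of order $mp$ in $A^\epsilon(N,2-k,\chi)$, and then extract $p^{1-k}\mid s(mp)a(n)$ from the integrality of $s(mp)g_{mp}$ and the coprimality $p\nmid n$. The only difference is that you spell out the attainability and grid bookkeeping (which orders admit reduced forms, the basis expansion in weight $2-k$) that the paper leaves implicit.
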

\begin{proof}
From the assumption that $S^{\epsilon^*}(N,2-k,\chi)=\{0\}$, we see that $f_{mp}=\frac{1}{s(mp)}q^{mp}+O(q^{mp+1})$. By applying the differential operator $D^{1-k}$ with $D=q\frac{d}{dq}$, we see that
\[D^{1-k}f_{mp}=\sum_nn^{1-k}a(n)q^n\in A^\epsilon(N,2-k,\chi).\]
From the assumption that $S^\epsilon(N,2-k,\chi)=\{0\}$ and the fact that the constant term of $D^{1-k}f_{mp}$ vanishes, we must have $D^{1-k}f_{mp}=(mp)^{1-k}g_{mp}$; here $g_{mp}$ is the reduced modular form of order $mp$ in $A^\epsilon(N,2-k,\chi)$. It follows that the Fourier coefficients of $s(mp)D^{1-k}f_{mp}$ are divisible by $(mp)^{1-k}$. In particular, if $(p,n)=1$, $p^{1-k}\mid s(mp)a(n)$. We are done.
\end{proof}

If $S(N,2-k,\chi)=\{0\}$, Theorem 4.13 includes the previous theorems. And such argument also applies in the degenerate case $N=1$, simplifying the Hecke operator argument of Duke and Jenkins \cite{duke2008zeros}.

\begin{Cor}
If $f_{m}=\sum_{n}a_m(n)q^n\in A^\epsilon(N,k,\chi)$ is a reduced modular form for $k\leq 0$ and a sign vector $\epsilon$ and if $S(N,2-k,\chi)=\{0\}$, then $m^{1-k}\mid s(m)a_m(n)$ whenever $(m,n)=1$.
\end{Cor}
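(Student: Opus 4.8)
The plan is to recognize this as exactly the special case of the proof of Theorem 4.13 in which the entire cuspform space, rather than a single $\epsilon$-subspace, is trivial, and to run that same argument with the full order $m$ in place of a distinguished prime. First I would observe that $S(N,2-k,\chi)=\{0\}$ forces $S^{\epsilon}(N,2-k,\chi)=S^{\epsilon^*}(N,2-k,\chi)=\{0\}$ for the given sign vector $\epsilon$, so that all the hypotheses needed to run the argument of Theorem 4.13 are in force. Moreover, since the obstruction to the existence of a reduced form of negative order is a (dual) cuspform space, every reduced form of negative order in weight $k$ and in weight $2-k$ now exists, and there are no cuspforms to worry about in weight $k\le 0$ either.

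The key step is then to reproduce the central identity from the proof of Theorem 4.13 verbatim, but with $mp$ replaced by $m$. Because $S^{\epsilon^*}(N,2-k,\chi)=\{0\}$, all reduced forms of order strictly between $m$ and $0$ exist, so every gap below $0$ is filled and the principal part of $f_m$ reduces to its single leading term $\frac{1}{s(m)}q^m$. Applying $D^{1-k}$, which preserves the sign vector and annihilates the constant term, produces $D^{1-k}f_m\in A^\epsilon(N,2-k,\chi)$ with single-term principal part $m^{1-k}\frac{1}{s(m)}q^m$ and vanishing constant term. Since $S^\epsilon(N,2-k,\chi)=\{0\}$, a form in $A^\epsilon(N,2-k,\chi)$ is determined by its principal part together with its constant term; and the reduced form $g_m$ of order $m$ in $A^\epsilon(N,2-k,\chi)$ (which exists, and whose constant term vanishes because the order-$0$ Eisenstein-type reduced form already occupies position $0$ in weight $2-k\ge 2$) has the same single-term principal part and the same vanishing constant term. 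Hence $D^{1-k}f_m=m^{1-k}g_m$.

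Finally I would extract Fourier coefficients. Writing $g_m=\sum_n c(n)q^n$ and comparing $q^n$-coefficients on both sides gives $n^{1-k}a_m(n)=m^{1-k}c(n)$ for every $n$. Multiplying through by $s(m)$ and invoking the standing integrality assumption $s(m)c(n)\in\mathbb Z$, I obtain $m^{1-k}\mid n^{1-k}s(m)a_m(n)$. When $(m,n)=1$ we have $\gcd(m^{1-k},n^{1-k})=1$ (all divisibilities being understood for $|m|$, since $m<0$), and therefore $m^{1-k}\mid s(m)a_m(n)$, as claimed.

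I expect the main obstacle to be justifying the \emph{exact} identity $D^{1-k}f_m=m^{1-k}g_m$, rather than a mere equality modulo a form of higher order. This rests on three structural points: that the principal part of $f_m$ collapses to its leading term (all gaps below $0$ are filled once $S^{\epsilon^*}(N,2-k,\chi)=\{0\}$), that $g_m$ carries no constant term (position $0$ is not a gap in weight $2-k\ge 2$), and that the map sending a form to its principal part and constant term is injective once $S^\epsilon(N,2-k,\chi)=\{0\}$. These are precisely the points already settled inside Theorem 4.13, so the only genuinely new observation is that the full vanishing $S(N,2-k,\chi)=\{0\}$ lets us apply them to the whole order $m$ at once. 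It is worth stressing that the identity is what yields divisibility by the full power $m^{1-k}$: merely iterating the stated conclusion of Theorem 4.13 over the prime divisors of $m$ would give only $\prod_{p\mid m}p^{1-k}$, the radical raised to $1-k$, which is weaker when $m$ is not squarefree.
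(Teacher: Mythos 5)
Your proposal is correct and takes essentially the same approach as the paper: the corollary is obtained by running the proof of Theorem 4.13 with the full order $m$ in place of $mp$, which yields the identity $D^{1-k}f_m = m^{1-k}g_m$ and hence $m^{1-k}\mid n^{1-k}s(m)a_m(n)$, so that coprimality of $m$ and $n$ gives the claim. Your closing observation---that merely iterating the \emph{statement} of Theorem 4.13 over the prime divisors of $m$ would only produce the radical of $m$ raised to the power $1-k$---is exactly the reason the corollary must be extracted from the proof of that theorem rather than from its statement, and it matches the paper's intent.
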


\begin{Exa}
In Bruinier and Bundschuh's example \cite{bruinier2003borcherds} ($N=5$, $\epsilon=+1$, $k=0$), we have $5\mid a_{-5}(n)$ for any $n$ with $(5,n)=1$ in the expansion of $f_{-5}$ in the introduction.
\end{Exa}

When $k\geq 2$, we can apply the Zagier duality and obtain corresponding divisibility results. 

\vskip 0.5 cm

\addcontentsline{toc}{chapter}{Bibliography}
\bibliographystyle{amsplain}
\bibliography{paper}

\end{document}